\numberwithin{equation}{section}
\newtheorem{theorem}{Theorem}[section]
\newtheorem{lemma}[theorem]{Lemma}
\newtheorem{proposition}[theorem]{Proposition}
\theoremstyle{definition}
\newtheorem{remark}[theorem]{Remark}
\newcommand{\R}{\mathbb{R}}
\newcommand{\N}{\mathbf{N}}
\newcommand{\eps}{\varepsilon}
\newcommand{\be}{\begin{equation}}
\newcommand{\ee}{\end{equation}}
\newcommand\lt{\left}
\newcommand\rt{\right}
\def\E{\mathcal{E}}
\def\diam{{\rm diam}}
\def\spt{{\rm spt}}
\def\cWp{\mathcal{W}_p}
\newcommand{\cWpa}[1]{\lt[\cWp^p(#1)\rt]^\alpha}
\title[Isoperimetric problem with Wasserstein interaction]{Existence and stability results for an isoperimetric problem with a non-local interaction of Wasserstein type}
\date{\today}
\author[J. Candau-Tilh]{Jules Candau-Tilh}
\address{J.C.: Ecole Normale Sup\'erieure, ENS, F-75005 Paris \& Universit\'e de Paris, Sorbonne Universit\'e, Laboratoire Jacques-Louis Lions, LJLL, F-75013 Paris}
\email{candau@clipper.ens.psl.eu}
\author[M. Goldman]{Michael Goldman}
\address{M.G.: Universit\'e de Paris, Sorbonne Universit\'e,  CNRS,  Laboratoire Jacques-Louis Lions, LJLL, F-75013 Paris}
\email{goldman@math.univ-paris-diderot.fr}
\begin{document}
\begin{abstract} 
The aim of this paper is to prove the existence of minimizers for a variational problem involving the minimization under volume constraint of the sum of the perimeter and a non-local energy of Wasserstein type.
This extends previous partial results to the full range of parameters. We also show that in the regime where the perimeter is dominant, the energy is uniquely minimized by balls.
\end{abstract}

\maketitle

\section{Introduction}
In this paper we consider a variational problem first proposed in \cite{PeRo} as a  model describing  the formation of bi-layers cellular  membranes. Our first main result 
is the proof of the existence of minimizers in every space dimension and for every value of the parameters in the model. This extends previous results obtained in \cite{BuCaLa,XiaZhou} to which we refer for further motivation of the problem. 
Our second main result is a proof of the minimality of the ball in the regime where the perimeter is dominant. To be more concrete, denoting by $W_p$ 
the Wasserstein distance for $p\ge 1$ (see \cite{Viltop}) and identifying a set $E\subset \R^d$ with the restriction of the Lebesgue measure to $E$, we introduce the non-local energy
\begin{equation}\label{def:Wp}
 \cWp(E)=\inf_{|F\cap E|=0} W_p(E,F).
\end{equation}
As already noticed in \cite{BuCaLa}, this may be viewed as a projection problem for the Wasserstein distance (see \cite{DePMSV}). We then consider for $\lambda, \alpha>0$ the variational problem
\begin{equation}\label{def:minprob}
 \inf_{|E|=\omega_d} P(E)+ \lambda \cWpa{E},
\end{equation}
where $\omega_d$ is the volume of the unit ball and  $P(E)$ denotes the perimeter of $E$, see \cite{Maggi}. Let us point out that probably the two most interesting cases are $\alpha=1$ and $\alpha= \frac{1}{p}$. Our first main result is the following:
\begin{theorem}\label{exis_mini}
 For every $d\ge 2$, $p\ge 1$, $\alpha>0$ and $\lambda>0$, problem \eqref{def:minprob} has minimizers.
 Moreover, there exists $C=C(d,p,\alpha)>0$ such that if $E=\cup_{i=1}^I E^i$ is such a minimizer with $E^i$ the connected components of $E$, then 
 \[
  \sum_{i=1}^I \diam(E^i)\le C (1+\lambda)^{\frac{(d-1)(1+p)}{1+\alpha p}} \quad \textrm{ and } \quad \inf_i \diam(E^i)\ge C  (1+\lambda)^{-\frac{1+p}{1+\alpha p}}.
 \] 
 As a consequence $I\le C (1+\lambda)^{\frac{d(1+p)}{1+\alpha p}}$.
\end{theorem}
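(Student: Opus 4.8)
The plan is to establish the theorem in three stages: first a compactness/lower semicontinuity argument giving existence of a (generalized) minimizer, then a density estimate showing the minimizer is a bounded open set with finitely many components, and finally the quantitative diameter bounds obtained by comparison with a suitable competitor (a ball, or a union of rescaled pieces). Throughout I would use the elementary facts that $\cWp(E)\le \diam(E)$ (move mass a bounded distance) and that $\cWp$ is translation invariant, together with the scaling $P(tE)=t^{d-1}P(E)$ and $\cWp(tE)=t\,\cWp(E)$.

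\emph{Step 1: existence.} Take a minimizing sequence $E_n$ with $|E_n|=\omega_d$ and $P(E_n)+\lambda\cWpa{E_n}$ bounded; in particular $P(E_n)$ is bounded. By compactness in $BV$ we get $E_n\to E$ in $L^1_{loc}$ up to translating each connected component, but mass may escape to infinity, so a priori $E=\bigsqcup_j E^j$ is a countable disjoint union of ``limit'' sets carrying a total mass $\le\omega_d$. Lower semicontinuity of the perimeter and of $\cWp$ (the latter because $W_p$ is lower semicontinuous under weak-$*$ convergence of measures of fixed mass, and the projection structure of $\cWp$ is stable) should give that the energy of the generalized configuration $\{E^j\}$ is $\le$ the infimum. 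The key point is then to rule out loss of mass: if some mass $m$ has escaped, adding back a ball of volume $m$ ``at infinity'' strictly decreases the total perimeter among configurations with the right volume while not increasing $\cWp$ beyond a controlled amount, or more cleanly one truncates and rescales. This gives a genuine minimizer, possibly disconnected but with finitely many components once Step 2 is in place.

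\emph{Step 2: density estimates and finiteness of $I$.} Here I would prove the standard perimeter density lower bound: a minimizer satisfies, for every $x\in\partial E$ and small $r$, $|E\cap B_r(x)|\ge c\,r^d$ and $|B_r(x)\setminus E|\ge c\,r^d$, the constant $c$ depending on $d,p,\alpha,\lambda$. This follows from the quasi-minimality of $P$ under the perturbation $E\mapsto E\setminus B_r(x)$: cutting out a ball changes the perimeter by at most $C r^{d-1}$ from the spherical cap, decreases it by the part of $\partial E$ inside $B_r$, changes the volume by $|E\cap B_r|$ which we restore by dilating (cost $O(|E\cap B_r|)$ on the perimeter), and changes $\cWp$ in a controlled way since $\cWp$ is Lipschitz for the symmetric difference in the relevant topology — here the exponent $\alpha$ and the chain rule $\cWpa{\cdot}$ produce the combination of $p$ and $\alpha$ appearing in the statement. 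Together with the isoperimetric inequality applied to each component, the density estimate forces each $\diam(E^i)\gtrsim$ a fixed quantity and $|E^i|\gtrsim$ a fixed quantity, hence $I$ is finite.

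\emph{Step 3: the quantitative diameter bounds.} For the lower bound on $\inf_i\diam(E^i)$: a very small component $E^i$ of volume $m$ contributes perimeter $\gtrsim m^{(d-1)/d}$ and, being small, $\cWp(E^i)\lesssim \diam(E^i)$; moving this mass to enlarge another component (or a far-away ball) costs at most $\cWp\lesssim (\text{distance})$ but the whole configuration is rescaled so volumes match — balancing the $m^{(d-1)/d}$ perimeter gain against the $\cWp^{p\alpha}$-type cost and optimizing in $m$ yields $\diam(E^i)\gtrsim (1+\lambda)^{-(1+p)/(1+\alpha p)}$; the exponent comes from equating a length scale $\ell$ with $\lambda \ell^{\alpha p}\cdot\ell^{?}$ after normalizing. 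For the upper bound on $\sum_i\diam(E^i)$: comparing the minimizer with a single ball of volume $\omega_d$ gives $P(E)+\lambda\cWpa{E}\le d\omega_d + \lambda\omega_d^\alpha$, hence $P(E)\lesssim 1+\lambda$; but by the density estimate a set of perimeter $P$ with the lower volume bound per component has $\sum_i\diam(E^i)\lesssim P(E)$ after accounting for the per-component isoperimetric deficit, and chasing the dependence of the density constant $c$ on $\lambda$ (it degrades polynomially, with the same $(1+p)/(1+\alpha p)$-type exponent) upgrades this to $\sum_i\diam(E^i)\lesssim (1+\lambda)^{(d-1)(1+p)/(1+\alpha p)}$. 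Finally $I\le C(1+\lambda)^{d(1+p)/(1+\alpha p)}$ follows by combining the two displayed bounds: $I\cdot(\inf_i\diam(E^i))\le\sum_i\diam(E^i)$.

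\emph{Main obstacle.} The delicate point is Step 1: showing no mass escapes to infinity and, relatedly, that the limiting object is a true competitor. Because $\cWp$ is itself defined by an infimum (a projection), one must check carefully that splitting a minimizer into a ``bulk'' piece and an escaping piece does not create a spurious decrease in $\cWp$ — i.e. that $\cWp$ is superadditive or at least behaves correctly under such decompositions. Quantifying the $\lambda$-dependence of the density constant in Step 2 (needed to get the sharp exponents rather than merely qualitative finiteness of $I$) is the second technical hurdle, and is where the precise interplay between the scaling of $P$, of $\cWp$, and the power $\alpha$ must be tracked.
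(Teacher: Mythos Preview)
Your three-step outline matches the paper's architecture, but two of the steps have genuine gaps, not just vagueness. In Step~1, neither ``adding back a ball at infinity'' nor ``truncate and rescale'' yields a contradiction: lower semicontinuity only tells you the generalized limit has energy $\le\inf$, and both operations \emph{increase} the energy, so you learn nothing. The paper instead proves tightness directly: with $m_{n,i}=|E_n\cap Q_i|$ for a unit-cube partition, the relative isoperimetric inequality gives $\sum_i m_{n,i}^{(d-1)/d}\le C\,P(E_n)$, hence $\sum_{i\ge I}m_{n,i}\le C I^{-1/d}$ uniformly in $n$, so the full mass $\omega_d$ is recovered in the family of local limits. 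The passage back from the generalized minimizer to a classical one then uses the \emph{exact} additivity $\cWp^p(E\cup E')=\cWp^p(E)+\cWp^p(E')$ for sets at distance $\ge C\max(|E|,|E'|)^{1/d}$, which follows from the $L^\infty$ bound on optimal transport; your super-additivity remark is in the right direction but exact additivity is what is actually needed. (Incidentally, your scaling is off: $\cWp^p(tE)=t^{d+p}\cWp^p(E)$, not $\cWp(tE)=t\,\cWp(E)$.)

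The more serious gap is in Step~2 and is precisely the ``second technical hurdle'' you flag without resolving. The Lipschitz bound for $[\cWp^p]^\alpha$ carries a factor $\max(\cWp^p(E),\cWp^p(E'))^{\alpha-1}$, which for $\alpha<1$ blows up unless you have a \emph{lower} bound on $\cWp^p$ of the minimizer. The paper supplies this via an interpolation inequality,
\[
\Bigl(\sum_i \cWp^p(E^i)\Bigr)^{1/p}\Bigl(\sum_i P(E^i)\Bigr)\ \ge\ C\Bigl(\sum_i |E^i|\Bigr)^{1+1/p},
\]
proved by testing Kantorovich duality for $W_1$ with a mollified indicator. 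Coupled with the upper bound $\sum_i P(E^i)\le C(1+\lambda)^{1/(1+\alpha p)}$ (obtained by comparing not with a single ball, which only gives $C(1+\lambda)$, but with roughly $\lambda^{d/(1+\alpha p)}$ balls of radius $\lambda^{-1/(1+\alpha p)}$), this forces $\sum_i\cWp^p(E^i)\ge C(1+\lambda)^{-p/(1+\alpha p)}$ and hence $\Lambda\sim(1+\lambda)^{(1+p)/(1+\alpha p)}$. Without this interpolation step the density constant is uncontrolled for $\alpha<1$ and the stated exponents cannot be reached; your heuristic balancing in Step~3 presupposes exactly the control that is missing.
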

Notice that we can actually say much more about the regularity of the minimizers, see Remark \ref{rem:reg}. This result was first obtained in the case $d=2$ in \cite{BuCaLa} and then extended to the case $d\ge 3$ in \cite{XiaZhou} but under  the assumption that $\lambda$ is small together
with some  restrictions on $\alpha$. The idea of the proof, which is by now well-established in the context of geometrical variational problems  (see e.g. \cite{golnov, KnMuNov, FraLieb, novprat}), 
is to follow a concentration-compactness type argument. 
We first show that thanks to the isoperimetric inequality,
lack of compactness for minimizing sequences can only come from splitting of the mass.  This leads to the existence of so-called generalized minimizers (see Proposition \ref{prop:existgenmin}).
Then, we show following \cite{BuCaLa}, that these generalized minimizers are actually $\Lambda-$minimizers of the perimeter (see \cite{Maggi}) and therefore have uniform density bounds. 
As a direct consequence, we obtain that they are made of a finite number of uniformly bounded connected components. At this point the proof of the existence is concluded as in \cite{BuCaLa}
using the fact that the non-local energy $\cWp^p$ is additive for sets which are sufficiently far apart.  \\

Our second main result is that if $\lambda$ is small enough then \eqref{def:minprob} is uniquely minimized by balls.
\begin{theorem}\label{ballmin}
 For every $d\ge 2$, $p\ge 1$ and $\alpha>0$, there exists $\lambda_0>0$ such that for every $\lambda\le \lambda_0$, balls are the only minimizers of \eqref{def:minprob}.
\end{theorem}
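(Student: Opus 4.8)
The strategy would be a quantitative stability argument, combining the quantitative isoperimetric inequality with an upper bound on how much the nonlocal Wasserstein term can vary. The key point is that by Theorem \ref{exis_mini}, a minimizer $E_\lambda$ for small $\lambda$ satisfies uniform diameter bounds (the bounds $(1+\lambda)^{\pm\cdots}$ are bounded above and below for $\lambda\le 1$), so $E_\lambda$ has bounded connected components, and in fact — since the lower bound on $\inf_i\diam(E^i)$ together with the upper bound on $\sum_i\diam(E^i)$ forces $I$ to stay bounded, and one expects that for $\lambda\to 0$ the energy converges to the isoperimetric value $P(B)$ which is only attained by a single ball — one should first show that for $\lambda$ small enough $E_\lambda$ is connected, i.e. $I=1$. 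This would follow from the strict subadditivity of the perimeter under merging versus the at-most-additive behavior of $\cWp^p$: splitting into two pieces costs a fixed amount of perimeter (bounded below by the isoperimetric deficit of disconnected competitors) while only saving $O(\lambda)$ in the nonlocal term.

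**Main steps.** First I would establish that minimizers are (indecomposable) sets of finite perimeter with $|E_\lambda|=\omega_d$, uniformly bounded, and with $P(E_\lambda)\to P(B)$ as $\lambda\to 0$; hence by the quantitative isoperimetric inequality (Fusco–Maggi–Pratelli) there is a ball $B_\lambda$ (unit ball up to translation) with
\[
  |E_\lambda\triangle B_\lambda|^2 \lesssim P(E_\lambda)-P(B) \le \lambda\big[\cWpa{B}-\cWpa{E_\lambda}\big].
\]
The crux is then to bound the right-hand side: I would show $\cWp(E_\lambda)\ge \cWp(B) - C|E_\lambda\triangle B_\lambda|^{\beta}$ for some exponent $\beta\in(0,1]$, i.e. a one-sided Hölder (or Lipschitz) continuity of $E\mapsto \cWp(E)$ with respect to $L^1$ convergence near the ball. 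Combined with the inequality above this yields $|E_\lambda\triangle B_\lambda|^2 \lesssim \lambda\, |E_\lambda\triangle B_\lambda|^\beta$, so $|E_\lambda\triangle B_\lambda|\lesssim \lambda^{1/(2-\beta)}\to 0$. Finally, to upgrade $L^1$-closeness to actual equality $E_\lambda=B_\lambda$, I would invoke the $\Lambda$-minimality of $E_\lambda$ for the perimeter (mentioned in the discussion after Theorem \ref{exis_mini}): $\Lambda$-minimizers that are $L^1$-close to a ball are $C^{1,\gamma}$-close to it, and then a selection/uniqueness argument — comparing $E_\lambda$ with the ball of the same volume and using that the ball is the unique minimizer with vanishing perimeter deficit, plus strict stability of the ball for the perimeter under the constraint — forces $E_\lambda$ to be exactly a ball. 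Alternatively one can run a direct contradiction argument: if $E_{\lambda_n}$ is a minimizer that is not a ball for $\lambda_n\to 0$, then $E_{\lambda_n}\to B$ in $C^{1,\gamma}$, and a second-variation / quantitative argument on the perimeter (the ball is a strict volume-constrained minimizer) contradicts minimality once the nonlocal perturbation is small enough.

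**The main obstacle.** The delicate step is the one-sided continuity estimate for $\cWp$ near the ball, i.e. controlling $\cWpa{B}-\cWpa{E}$ by a power of $|E\triangle B|$. Since $\cWp(E)=\inf_{|F\cap E|=0}W_p(E,F)$ is itself an infimum, one direction is easy (take a competitor $F$ for $B$ and modify it slightly for $E$), but one must check that an almost-optimal $F$ for $B$ can be perturbed into an admissible competitor for $E$ (disjoint from $E$) at a cost controlled by $|E\triangle B|$ — this requires understanding that the optimal $F$ for the ball is well-behaved (it is a bounded set, essentially an annular region around $B$ by symmetry), so that shrinking/translating it to avoid the small set $E\setminus B$ perturbs the Wasserstein cost by at most $O(|E\triangle B|^{1/p})$ or so. Getting the right exponent $\beta$ here (and making sure $\beta>0$ suffices, which it does since we only need $|E_\lambda\triangle B_\lambda|\to 0$) is where the real work lies; the rest is assembling standard tools (quantitative isoperimetry, $\Lambda$-minimality regularity, compactness).
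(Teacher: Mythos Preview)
Your overall plan --- reduce to nearly spherical sets via $\Lambda$-minimality and the quantitative isoperimetric inequality, then run a Fuglede-type closing argument --- is exactly the paper's strategy. The reduction to $C^{1,\gamma}$-closeness (your ``upgrade'' step) is Proposition~\ref{prop:nearlyspherical} in the paper, and the perimeter side of the closing loop is Fuglede's inequality $\int_{\partial B_1} f^2\le C(P(E)-P(B_1))$.

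The genuine gap is in the estimate you propose for the Wasserstein term. A H\"older/Lipschitz bound of the form $\cWpa{B}-\cWpa{E}\lesssim |E\triangle B|^\beta$ with $\beta\le 1$ (which is what your competitor-modification argument, or Lemma~\ref{wassloc}, would give) is \emph{too weak} to close the loop. Combining it with either the quantitative isoperimetric inequality or with Fuglede yields only
\[
\int_{\partial B_1} f^2 \ \lesssim\ P(E)-P(B_1)\ \le\ \lambda\big[\cWpa{B}-\cWpa{E}\big]\ \lesssim\ \lambda\int_{\partial B_1}|f|,
\]
which does \emph{not} force $f=0$: for $f$ small and roughly constant, the left side is of order $\|f\|_\infty\int|f|$, so you only deduce $\|f\|_\infty\lesssim\lambda$. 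No appeal to ``strict stability of the perimeter'' alone fixes this, because the mismatch is between $\int f^2$ (what the perimeter gives you) and $\int|f|$ (what your Wasserstein estimate gives you); bootstrapping just reproduces the same gap. Your claim that ``$\beta>0$ suffices since we only need $|E_\lambda\triangle B_\lambda|\to 0$'' is therefore misleading: $L^1$ (and hence $C^{1,\gamma}$) convergence is fine, but the \emph{final} equality $E=B$ still needs a matching quadratic bound on the nonlocal term.

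What the paper actually proves, and what you are missing, is precisely that quadratic bound: Proposition~\ref{prop:fugledeW} shows
\[
\cWpa{B_1}-\cWpa{E}\ \le\ C\int_{\partial B_1} f^2
\]
for nearly spherical $E$. This is obtained not by modifying competitors $F$, but by plugging the Kantorovich potentials $(\phi,\psi)$ of the ball's transport problem into the dual formulation for $\cWp(E)$ (Lemma~\ref{lem:Wasball} and Lemma~\ref{lem:technical}); the radial monotonicity of $\psi$ identifies the optimal $F$ for the dual problem, and then the Lipschitz continuity of $\phi-\psi$ across $\partial B_1$ produces the factor $|1-|x||$ that integrates to $\int f^2$. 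With this in hand the chain
\[
\int_{\partial B_1} f^2 \ \lesssim\ P(E)-P(B_1)\ \le\ \lambda\big[\cWpa{B_1}-\cWpa{E}\big]\ \lesssim\ \lambda\int_{\partial B_1} f^2
\]
forces $f=0$ for $\lambda$ small. This duality step is the ``Fuglede-type argument for $\cWp$'' flagged in the introduction, and it is the substantive ingredient your proposal does not supply.
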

\begin{remark}
 Let us point out that if we considered the volume as the relevant parameter and replaced \eqref{def:minprob} by
 \[
  \min_{|E|=m} P(E)+\cWpa{E},
 \]
then by scaling (see \cite{XiaZhou}) we would obtain that balls are the unique minimizers for small $m$ if $ \alpha\lt(1+\frac{p}{d}\rt)+\frac{1}{d}>1$
(which is essentially the case for which \cite{XiaZhou} obtained the existence of minimizers) while balls are the unique minimizers for large $m$ if $  \alpha\lt(1+\frac{p}{d}\rt)+\frac{1}{d}<1$.
\end{remark}

Again, this result is neither surprising by its statement nor by the strategy to prove it. Indeed, following the pioneering work of Cicalese and Leonardi
which gave in \cite{CicLeo} an alternative proof of the quantitative isoperimetric inequality, it has been understood that such stability results may be obtained by combining the
regularity theory for $\Lambda-$minimizers of the perimeter together with a (usually delicate) Taylor expansion of the energy around the ball. 
This second part of the proof is often referred to as a Fuglede type argument, see \cite{fuglede}. 
Let us cite \cite{KnMu, AcFuMo, F2M3,CaFuPra, MuVes} as a few examples where this strategy has been carried out.   The main difficulty here is that our non-local energy depends
in a very implicit way on the competitor. Moreover, as opposed to \cite{AcFuMo,MuVes, GolNovRuf}, the underlying PDE is non-linear (namely the Monge-Amp\`ere equation)
making it very difficult to use standard tools from shape optimization such as shape derivatives. This makes the exact computation of the Taylor expansion of the energy challenging.
We go around this difficulty by plugging in the dual formulation of optimal transport the Kantorovich potentials corresponding to the ball and show that this leads to a lower bound which is good enough for our purpose (see Proposition \ref{prop:fugledeW}).
\\

\textbf{Related results in the literature.} In the footsteps of \cite{KnMu} there has been an intense research activity around isoperimetric problems with non-local interactions. 
Probably the simplest and most studied one is the Gamow liquid-drop model where the non-local part of the energy is given by a Riesz type interaction energy. For this model, it has been shown that
generalized minimizers exist and are balls for small volume (see \cite{KnMu, F2M3,CaFuPra, novprat} and the review paper \cite{ChMuTo}). However, as opposed to our setting, 
it has been proven for the liquid-drop model that under some restrictions on the parameters, classical minimizers do not exist for large volumes (see \cite{KnMu,FranNam}).
This is due to the long-range nature of the interactions induced by the Riesz kernel (in comparison with Proposition \ref{prop:super}). 
Indeed, for compactly supported kernels it is shown in \cite{Rigot} that  minimizers exist for all volumes  (see also \cite{Pegon}).


\subsection*{Acknowledgements} This work was partially supported by the ANR project SHAPO.\\

Shortly before submitting this paper, Novack, Topaloglu and Venkatraman proved in \cite{NoToVenk} (uploaded on the Arxiv the 10th of August 2021) essentially the same existence 
result as Theorem \ref{exis_mini}. While the basic ingredients of the proofs are similar
(a combination of the isoperimetric inequality to avoid the loss of mass for minimizing sequences together with a quasi-minimality property in order to obtain  density estimates), 
the implementation is quite different. Indeed, in the present work we avoid the use of Almgren nucleation Lemma and rely simply on the relative isoperimetric inequality.
Moreover, we separate the compactness and the regularity issues with the use of generalized minimizers. As a result, we can directly rely on the well-established regularity
theory for $\Lambda-$minimizers of the perimeter. Finally, our result is more quantitative thanks to a more explicit treatment of the volume constraint (see Proposition \ref{prop:relax})
and our interpolation inequality (see Proposition \ref{prop:interpol}).

\section{The non-local energy}\label{sec:nonlocal}
In this section we gather a few useful results about the energy $\cWp$ defined in \eqref{def:Wp}. Most of these results were obtained in the case of bounded sets in \cite{BuCaLa,XiaZhou} but often with quite different proofs. We start with the well-posedness of \eqref{def:Wp}.
\begin{proposition}\label{prop:Wb}
 There exists $C=C(d,p)>0$ such that for every set  $E\subset \R^d$,
 \begin{equation}\label{nrj_bdd}
  \cWp(E)\le C |E|^{\frac{1}{p}+\frac{1}{d}}.
 \end{equation}
Moreover, if $|E|<\infty$, the minimization problem \eqref{def:Wp} is attained by a unique minimizer $F$ and if $\pi$ is an optimal transport plan\footnote{for $p>1$ we know from \cite[Theorem 2.44]{Viltop} 
that $\pi$ is unique and is  induced by a map but for $p=1$, since we do not assume finite moments for $E$ it does not follow from  \cite[Theorem 2.50]{Viltop}.}for $W_p(E,F)$,  we have the estimate
\begin{equation}\label{eq:Linf}
  |x-y|\le C |E|^{\frac{1}{d}} \qquad \textrm{for } \pi-a.e. \ (x,y).
\end{equation}

\end{proposition}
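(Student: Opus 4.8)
The plan is to establish the three assertions in turn; throughout one may assume $|E|<\infty$, since \eqref{nrj_bdd} is vacuous otherwise.

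\textbf{The upper bound.} For \eqref{nrj_bdd} I would produce an explicit competitor by a purely local rearrangement. Tile $\R^d$ by half‑open cubes $\{Q_j\}$ of sidelength $\ell:=(2|E|)^{1/d}$, so that $|E\cap Q_j|\le|E|=\tfrac12|Q_j|\le|E^c\cap Q_j|$ for every $j$. On each $E\cap Q_j$ choose a Lebesgue‑measure‑preserving bijection (mod null sets) onto a subset of $E^c\cap Q_j$ of the same measure, and let $T\colon E\to\R^d$ glue these maps. Then $F:=T(E)\subset\bigcup_j(E^c\cap Q_j)\subset E^c$, so $F$ is admissible with $|F|=|E|$, and $|T(x)-x|\le\diam(Q_j)=\sqrt d\,\ell$ for $x\in Q_j$; using $(\Id,T)_\#(\mathcal L^d\rst E)$ as a transport plan,
\[
\cWp(E)^p\le W_p(E,F)^p\le\int_E|T(x)-x|^p\,dx\le|E|\,(\sqrt d\,\ell)^p=C(d,p)\,|E|^{1+\frac pd},
\]
which is \eqref{nrj_bdd}; in particular $\cWp(E)<\infty$.

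\textbf{Existence and uniqueness of the minimizer.} I would run the direct method after relaxing the set constraint. Given a minimizing sequence $(F_n)$, boundedness of $W_p(E,\mathbf 1_{F_n}\mathcal L^d)$ together with the fact that $\mathcal L^d\rst E$ is a fixed finite measure forces $\{\mathbf 1_{F_n}\mathcal L^d\}$ to be tight (mass leaving every ball would cost an unbounded amount of transport). Extracting a subsequence, $\mathbf 1_{F_n}\rightharpoonup\theta$ weakly‑$\ast$ in $L^\infty(\R^d)$, and with tightness this upgrades to $\mathbf 1_{F_n}\mathcal L^d\rightharpoonup\theta\mathcal L^d$ narrowly (the tails are controlled); the limit satisfies $0\le\theta\le1$, and testing against $\mathbf 1_E\in L^1$ and $\mathbf 1_{B_R}$ gives $\theta=0$ a.e.\ on $E$ and $\int\theta=|E|$. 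Lower semicontinuity of $W_p$ then yields $W_p(E,\theta\mathcal L^d)\le\cWp(E)$, so $\theta\mathcal L^d$ minimizes $W_p(E,\cdot)$ over the convex set $\mathcal K:=\{\rho\,\mathcal L^d:0\le\rho\le\mathbf 1_{E^c},\ \int\rho=|E|\}$, i.e.\ it is the Wasserstein projection of $\mathcal L^d\rst E$ onto $\mathcal K$. The decisive point is that \emph{every} minimizer over $\mathcal K$ is a characteristic function $\mathbf 1_F$ with $F\subset E^c$: the optimality condition, written through the Kantorovich potential $\psi$ of the optimal plan, forces $\rho=\mathbf 1_{E^c\cap\{\psi<c\}}$ (a bathtub/bang‑bang principle). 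Such an $F$ is admissible for \eqref{def:Wp}, and since $\mathbf 1_F\in\mathcal K$ one gets $W_p(E,F)=\cWp(E)$; thus existence holds and, moreover, $\min_{\mathcal K}W_p^p(E,\cdot)=\cWp(E)^p$. For this step I would rely on, or adapt, the projection results of \cite{DePMSV}. Uniqueness then follows because $\rho\mapsto W_p^p(E,\rho\mathcal L^d)$ is convex (a supremum of affine functionals, by Kantorovich duality): the midpoint of two distinct minimizers of \eqref{def:Wp} would be a minimizer over $\mathcal K$ that is not a characteristic function, contradicting the previous point. (When $p>1$ one may instead invoke uniqueness of the Brenier--McCann map.)

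\textbf{The $L^\infty$ estimate: cut and repair.} Let $F$ be the minimizer, $\pi$ an optimal plan for $W_p(E,F)$, and $\Lambda:=C(d,p)\,|E|^{1/d}$ with $C(d,p)$ large, to be fixed. Suppose $m:=\pi(\{|x-y|>\Lambda\})>0$ and split $\pi=\pi_\le+\pi_>$ at $|x-y|=\Lambda$, with marginals $\mu_>\le\mathcal L^d\rst E$ and $\nu_>=\rho_>\mathcal L^d\le\mathbf 1_F\mathcal L^d$ of $\pi_>$, each of mass $m$. I would re‑transport $\mu_>$ locally onto free space: tiling $\R^d$ by cubes $Q$ of sidelength $(3|E|)^{1/d}$, the available density $\mathbf 1_{E^c}-\mathbf 1_F$ has mass $|Q|-|E\cap Q|-|F\cap Q|\ge 3|E|-2|E|=|E|\ge\mu_>(Q)$ in each cube, so $\mu_>$ can be sent onto some $\sigma\le\mathbf 1_{E^c}-\mathbf 1_F$ with $\int\sigma=m$ at cost $\le C_1(d,p)\,m\,|E|^{p/d}$. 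Then $\theta_{\rm new}:=\mathbf 1_F-\rho_>+\sigma\in\mathcal K$, and concatenating $\pi_\le$ with the new plan gives
\[
W_p^p(E,\theta_{\rm new}\mathcal L^d)\le W_p^p(E,F)-\int_{\{|x-y|>\Lambda\}}|x-y|^p\,d\pi+C_1\,m\,|E|^{p/d}\le\cWp(E)^p-m\big(\Lambda^p-C_1|E|^{p/d}\big).
\]
Choosing $C(d,p)$ so that $\Lambda^p>C_1|E|^{p/d}$ makes the right‑hand side strictly below $\cWp(E)^p=\min_{\mathcal K}W_p^p(E,\cdot)$, a contradiction since $\theta_{\rm new}\mathcal L^d\in\mathcal K$. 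Hence $m=0$, which is \eqref{eq:Linf}.

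\textbf{Expected main obstacle.} The one genuinely non‑elementary ingredient is the bang‑bang and uniqueness property of the Wasserstein projection onto $\{\rho\le\mathbf 1_{E^c}\}$ used in the second step (together with the minor measure‑theoretic care of taking weak‑$\ast$ limits of the densities, so that the constraint ``$\theta=0$ on $E$'' survives the limit); granting that, both \eqref{nrj_bdd} and \eqref{eq:Linf} reduce to the elementary local constructions above.
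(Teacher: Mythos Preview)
Your proposal is correct and follows essentially the same route as the paper: the upper bound via a cube tiling with local rearrangement, existence and uniqueness by reduction to the Wasserstein projection onto $\{\rho\le\mathbf 1_{E^c}\}$ from \cite{DePMSV}, and the $L^\infty$ bound by a cut--and--repair argument exploiting the relaxed formulation. The only implementation difference worth mentioning is in \eqref{eq:Linf}: the paper first localizes the ``bad'' mass to a single ball $B_R$ with $|B_R|=3|E|$ and redistributes it there via a product plan, whereas you reuse the global tiling from the first step (now with cubes of volume $3|E|$) to re-transport all of $\mu_>$ cube by cube; both yield the same contradiction and neither is materially simpler.
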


\begin{proof}
We may assume without loss of generality that $|E|<\infty$ otherwise there is nothing to prove. By scaling we can further assume that $|E|=1$.
In order to prove \eqref{nrj_bdd}, we will construct a partition $(E_i)_{i \geq 1}$ of $E$ such that each $E_i$ can be transported with a well-controlled cost.
To this aim, consider a partition of $\R^d$ into cubes $(Q_i)_{i \geq 1}$ of sidelength $\ell = 2^{1/d}$. Since $|E|=1$,   if we define $E_i=E\cap Q_i$ we have $|E_i|\le |Q_i|/2$ for every $i$. 
 Therefore we can find  a set $F_i \subset Q_i$ such that $|E \cap F_i| = 0$ and $|F_i| = |E_i|$. If  $T_i$ is the optimal transport map (in fact any transport map would work)
 from $E_i$ to $F_i$ we have 
	\[
	\sup_{E_i} |T_i - x| \leq C.
	\] 
	\noindent 
	Finally, consider  $F=\cup_i F_i$ and $T$   the map whose restriction to each $Q_i$ is $T_i$. The map $T$ is a transport map from $E$ to $F$ and 
	\[
	\mathcal{W}_p(E) \leq W_p(E, F) \leq \bigg(\sum_{i \geq 1} \int_{E_i} |T_i-x|^p \, \bigg)^{\frac{1}{p}} \leq C.
	\]
	\noindent
	This proves \eqref{nrj_bdd}.
	\bigskip
	
	Existence and uniqueness of a minimizer $F$ for \eqref{def:Wp} follows from 
	\cite[Proposition 5.2]{DePMSV} (which is stated for $p = 2$, but generalizes easily to any $p \geq 1$) with $f=\chi_{E^c}$ and $\Omega=\R^d$.
	Moreover, as a consequence of \cite[Proposition 5.2]{DePMSV} we have 
	\begin{equation}\label{eq:equalrelaxW}
  	\widetilde{\cWp}(E)=\inf_{\mu}\lt\{ W_p(E,\mu) \ : \mu\le \chi_{E^c}\rt\} =\cWp(E).
 	\end{equation}
	and $\chi_F$ is also the unique minimizer of $\widetilde{\cWp}(E)$.
	Let $\pi$ be an optimal transport plan for $W_p(E,F)$ and let us show \eqref{eq:Linf}. For this we 
	 adapt the proof of \cite[Lemma 4.3]{XiaZhou} to the case of plans instead of maps. Letting
\[
 \Gamma=\{(x,y)\in \spt \pi \ : \ |x-y|\ge C\}
\]
let us show that for $C$ large enough, $\pi(\Gamma)=0$. Assume that it is not the case and let $R$ be such that $|B_{R}|=3$. 
Then, there exists $x\in \R^d$ such that $m=\pi(\Gamma\cap (B_{R}(x)\times \R^d))>0$. Without loss of generality we may assume that $x=0$.
Let $\pi_{\rm bad}=\chi_{\Gamma\cap (B_{R}\times \R^d)}\pi$.
Since $|B_{R}|-|E|-|F|\ge 1\ge m>0$, 
there exists $\widetilde{\mu}\le \chi_{B_{R}}(1-\chi_E-\chi_F)$ with $\widetilde{\mu}(\R^d)=\pi_{\rm bad}(\R^d\times \R^d)$. 
Finally let $\theta$ be the first marginal of $\pi_{\rm bad}$ and set 
\[
 \tilde{\pi}= \pi-\pi_{\rm bad} +\frac{1}{m} \theta\otimes \tilde \mu.
\]
It is readily checked that the first marginal of $\tilde{\pi}$ is $\chi_E$ and that its second marginal $\mu$ satisfies $\mu\le \chi_{E^c}$. We thus have on the one hand by definition
of $\Gamma$
\[
 W^p_p(E,F)\ge \int_{(\Gamma\cap(B_{R}\times \R^d))^c} |x-y|^p d\pi + m C^p.
\]
On the other hand, by minimality of $F$ for $\widetilde{W}_p(E)$,
\[
 W_p^p(E,F)\le W_p^p(E,\mu)\le \int_{(\Gamma\cap(B_{R}\times \R^d))^c} |x-y|^p d\pi + m2^p R^p.
\]
This implies $C<2 R$ and concludes the proof that $\pi(\Gamma)=0$ if $C$ is large enough.

\end{proof}
We now turn to the super-additivity and lower semi-continuity of $\cWp$.
\begin{proposition}\label{prop:super}
 We have:
 \begin{itemize}
  \item[(i)] If $E$ and $E'$ are disjoint sets  then 
  \begin{equation}\label{repulsive}
   \cWp^p(E\cup E')\ge  \cWp^p(E)+ \cWp^p( E').
  \end{equation}
As a consequence, if $E\subset E'$ then $\cWp(E)\le \cWp(E')$;
\item[(ii)] There exists $C>0$ such that if 
\[d(E,E')\ge C \max(|E|^{\frac{1}{d}}, |E'|^{\frac{1}{d}}),\]
then
\[
 \cWp^p(E\cup E')=  \cWp^p(E)+ \cWp^p( E');
\]
\item[(iii)] If $E_n$ converges in $L^1_{loc}$ to $E$ then 
\begin{equation}\label{semicont}
		\mathcal{W}_p(E) \leq \liminf_n \mathcal{W}_p(E_n).
		\end{equation}
 \end{itemize}

\end{proposition}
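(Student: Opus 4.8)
The plan is to derive all three statements by cutting and pasting transport plans, relying throughout on the relaxed formulation \eqref{eq:equalrelaxW} and on the quantitative $L^\infty$-bound \eqref{eq:Linf} from Proposition \ref{prop:Wb}. For (i), given the optimal set $F$ for $E\cup E'$ and an optimal plan $\pi$ for $W_p(E\cup E',F)$ (with marginals $\chi_{E\cup E'}$ and $\chi_F$), I would split it into $\pi_E:=\pi\rst(E\times\R^d)$ and $\pi_{E'}:=\pi\rst(E'\times\R^d)$. Since $E$ and $E'$ are disjoint, $\pi_E$ has first marginal $\chi_E$ and a second marginal $\mu_E\le\chi_F\le\chi_{E^c}$, hence is admissible in $\widetilde{\cWp}(E)$, and symmetrically for $\pi_{E'}$; by \eqref{eq:equalrelaxW} this gives $\cWp^p(E)+\cWp^p(E')\le\int|x-y|^p\,d\pi_E+\int|x-y|^p\,d\pi_{E'}=\int|x-y|^p\,d\pi=\cWp^p(E\cup E')$, which is \eqref{repulsive}; the monotonicity consequence then follows by applying this to the disjoint decomposition $E'=E\cup(E'\setminus E)$ and using $\cWp\ge0$.

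For (ii), in view of (i) only the bound $\cWp^p(E\cup E')\le\cWp^p(E)+\cWp^p(E')$ is needed, and I would prove it by gluing the optimal configurations for $E$ and $E'$. If $F,F'$ are optimal for $E,E'$ with optimal plans $\pi,\pi'$, then by \eqref{eq:Linf} the plan $\pi$ is supported in $\{|x-y|\le C|E|^{1/d}\}$, so $F$ lies, up to a null set, in the $C|E|^{1/d}$-neighbourhood of $E$, and likewise $F'$ in the $C|E'|^{1/d}$-neighbourhood of $E'$. Taking the constant in the statement to be $2C$ with $C$ as in \eqref{eq:Linf}, the hypothesis $d(E,E')\ge 2C\max(|E|^{1/d},|E'|^{1/d})$ forces $F$ and $F'$ to be disjoint, as well as $F\cap E'=\emptyset$ and $F'\cap E=\emptyset$; hence $F\cup F'$ is admissible for $E\cup E'$, and the glued plan $\pi+\pi'$ transports $\chi_{E\cup E'}$ to $\chi_{F\cup F'}$, so $\cWp^p(E\cup E')\le W_p^p(E\cup E',F\cup F')\le W_p^p(E,F)+W_p^p(E',F')=\cWp^p(E)+\cWp^p(E')$. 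Combined with (i) this yields the equality (the degenerate cases $|E|\in\{0,\infty\}$ being trivial or vacuous).

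For (iii), I would argue by compactness. After extracting a subsequence we may assume $\cWp(E_n)\to\liminf_n\cWp(E_n)=:L<\infty$; let $F_n$ be optimal for $E_n$. Since $0\le\chi_{F_n}\le1$, a further subsequence gives $\chi_{F_n}\rightharpoonup\mu$ weakly-$*$ locally, and passing to the limit in $\chi_{F_n}+\chi_{E_n}\le1$ (using $\chi_{E_n}\to\chi_E$ in $L^1_{loc}$) shows $\mu\le\chi_{E^c}$; the bound \eqref{eq:Linf}, which keeps $F_n$ inside a fixed-size neighbourhood of $E_n$, is then used to verify that no mass escapes to infinity, i.e. $\mu(\R^d)=|E|$, so that $\mu$ is admissible in $\widetilde{\cWp}(E)$. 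Invoking the lower semicontinuity of $W_p$ (after restricting to balls $B_R$ and letting $R\to\infty$) together with \eqref{eq:equalrelaxW} then gives $\cWp(E)=\widetilde{\cWp}(E)\le W_p(E,\mu)\le\liminf_n W_p(E_n,F_n)=L$. The main obstacle I anticipate is exactly this last point in (iii): controlling the possible loss of mass of the $F_n$ at infinity so that $W_p(E,\mu)$ is meaningful and the lower semicontinuity of $W_p$ applies — this is where the quantitative transport estimate \eqref{eq:Linf} is essential, and it is the reason it is convenient to have it available before proving Proposition \ref{prop:super}.
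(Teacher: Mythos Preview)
Your arguments for (i) and (ii) are correct and essentially identical to the paper's; the only cosmetic difference is that in (i) you bound $\cWp^p(E)\le\int|x-y|^p\,d\pi_E$ directly via the relaxed problem, whereas the paper remarks in addition that $\pi_E$ is itself optimal between $E$ and $\mu_E$ (which is not needed for the inequality).

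For (iii) there is a genuine gap. You invoke \eqref{eq:Linf} to say that $F_n$ lies in a ``fixed-size'' neighbourhood of $E_n$, but the radius in \eqref{eq:Linf} is $C|E_n|^{1/d}$, and $L^1_{\mathrm{loc}}$ convergence gives no control on $|E_n|$: one can have $E_n\to E$ in $L^1_{\mathrm{loc}}$ with $|E_n|\to\infty$ and $\cWp(E_n)$ bounded (e.g.\ append to $E$ a long thin rectangle escaping to infinity). In that regime the neighbourhood blows up and your tightness argument for $\mu(\R^d)=|E|$, as well as the subsequent lower semicontinuity of $W_p$, breaks down as written. Your compactness scheme does work once $\sup_n|E_n|<\infty$, so the issue is really the reduction to that case.

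The paper resolves this in one line by using the monotonicity you just proved in (i): set $E_{R,n}=E_n\cap B_R$, so that $E_{R,n}\to E\cap B_R$ in $L^1$ (hence with bounded volumes), apply the compactness/lsc argument in this $L^1$ setting to get $\cWp(E\cap B_R)\le\liminf_n\cWp(E_{R,n})\le\liminf_n\cWp(E_n)$, and finally let $R\to\infty$. You already mention the ``restrict to $B_R$ and let $R\to\infty$'' trick, but it has to be applied to the \emph{sets} $E_n$ (via (i)) before running the compactness argument, not only inside the lower semicontinuity of $W_p$ at the end.
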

\begin{proof}
To prove $(i)$, let $F$ be  the $\mathcal{W}_p$-minimizer for $E \cup E'$, and $\pi$ be  an optimal transport plan from $E \cup E'$ to $F$. 
Let $\mu_E$ be the second marginal of $\chi_{E\times\R^d}\pi$ and $\mu_{E'}$
be the second marginal of $\chi_{E'\times\R^d}\pi$. By definition $\mu_E$ is   $\widetilde{\cWp}$-admissible (recall \eqref{eq:equalrelaxW}) for $E$.
Moreover, $\chi_{E\times\R^d}\pi$ is an optimal transport plan between $E$ and $\mu_E$. The corresponding statement also holds for $E'$ instead of $E$. 
Therefore, appealing once more to \eqref{eq:equalrelaxW},
	\[
 	\mathcal{W}^p_p(E) + \mathcal{W}^p_p(E') \leq W_p^p(E, \mu_E) + W_p^p(E', \mu_{E'}) = \int_{(E \cup E')\times \R^d} |x-y|^p \, \mathrm{d}\pi = \mathcal{W}_p^p(E \cup E').
 	\]
 	
 	Property $(ii)$ is a direct consequence of \eqref{eq:Linf}. Indeed, if $F$ and $F'$ are the $\mathcal{W}_p$-minimizers for $E$ and  $ E'$, by \eqref{eq:Linf}, $|F\cap F'|=0$ so that $F\cup F'$ 
 	is admissible for $E\cup E'$ which gives $\cWp(E\cup E')\le \cWp(E)+\cWp(E')$.
 	\bigskip
 	
 	We finally prove $(iii)$, and  consider a sequence $(E_n)_{n \geq 1}$ that is $L^1_{\mathrm{loc}}$ converging to $E$. 
 	For every $R>0$ set $E_{R,n}=E_n\cap B_R$ so that $E_{R,n}$ converges in $L^1$ to $E_R=E\cap B_R$. Using the continuity of $W_p$ with respect to weak convergence,
 	\eqref{eq:Linf} and \eqref{eq:equalrelaxW} it is not hard to check  that $\cWp$ is lower semi-continuous with respect to $L^1$ convergence (in Lemma \ref{wassloc} below we will actually prove a much stronger result).
 	Since by \eqref{repulsive}, $\cWp(E_{R,n})\le \cWp(E_n)$ we have 
 	\[
 	 \cWp(E_R)\le \liminf_{n\to \infty} \cWp(E_{R,n})\le \liminf_{n\to \infty} \cWp(E_n).
 	\]
Since $E_R$ converges in $L^1$ to $E$ as $R\to \infty$, using once more the lower semi-continuity of $\cWp$ for this convergence we conclude the proof.
\end{proof}
\begin{remark}\label{rem:contR}
 Let us point out that for every set $E$ with $|E|<\infty$, since $E\cap B_R$ converges in $L^1$ to $E$ as $R\to \infty$, we have by lower semi-continuity 
 and $\cWp(E\cap B_R)\le \cWp(E)$ that $\lim_{R\to \infty} \cWp(E\cap B_R)=\cWp(E)$.
\end{remark}

We then prove that $\cWp^p$ is  Lipschitz continuous with respect to $L^1$ convergence.   This is a  crucial ingredient in order to obtain the $\Lambda-$minimality property of generalized minimizers.
\begin{lemma}\label{wassloc}
	There exists a constant $C = C(d,p)>0$ such that for any Lebesgue sets $E,E'$
	
	\begin{equation}\label{eq:LipW}
	| \mathcal{W}_p^p(E) - \mathcal{W}_p^p(E') |\leq C (|E|^{\frac{p}{d}}+|E'|^{\frac{p}{d}})|E \Delta E'|.
	\end{equation}
	Moreover, there exists $C=C(d,p,\alpha)>0$ such that for every family of sets $(E^i)_{i\ge 1}$ and $((E')^i)_{i\ge 1}$,
	\begin{multline}\label{eq:LipW2}
 \lt| \lt[\sum_i \cWp^p(E^i)\rt]^\alpha-\lt[\sum_{i} \cWp^p((E')^i)\rt]^\alpha\rt|\\
 \le C \max\lt(\lt(\sum_i \cWp^p(E^i)\rt)^{\alpha-1}, \lt(\sum_i \cWp^p((E')^i)\rt)^{\alpha-1}\rt) \lt|\sum_i \cWp^p(E^i)-\sum_i \cWp^p((E')^i)\rt|.
\end{multline}
\end{lemma}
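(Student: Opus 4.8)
The plan is to prove \eqref{eq:LipW} by building, out of an optimal configuration for $\cWp(E')$, an admissible competitor for the relaxed energy $\widetilde{\cWp}(E)$ from \eqref{eq:equalrelaxW} whose transport cost exceeds $\cWp^p(E')$ by at most $C(|E|^{p/d}+|E'|^{p/d})|E\Delta E'|$; swapping the roles of $E$ and $E'$ then yields the two-sided bound \eqref{eq:LipW}. We may clearly assume $|E|,|E'|<\infty$ (so that also $|E\Delta E'|<\infty$), the other cases being trivial.

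First I would fix the minimizer $F'$ of $\cWp(E')$ and an optimal transport plan $\pi'$ between $\chi_{E'}$ and $\chi_{F'}$, which exist by Proposition \ref{prop:Wb}, and let $\pi_0$ be the restriction of $\pi'$ to the pairs $(x,y)$ with $x\in E\cap E'$ and $y\notin E$. Then $\int|x-y|^p\,d\pi_0\le\int|x-y|^p\,d\pi'=\cWp^p(E')$; the second marginal $\mu_0$ of $\pi_0$ is supported in $E^c$ and dominated by $\chi_{F'}$, hence $\mu_0\le\chi_{E^c}$ and $\|\mu_0\|\le|E'|$; and the first marginal of $\pi_0$ equals $\chi_{E\cap E'}-\rho$ with $0\le\rho\le\chi_{E\cap E'}$ and $\|\rho\|\le\pi'(\R^d\times E)=|F'\cap E|\le|E\setminus E'|$, using $|F'\cap E'|=0$. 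Consequently the part of $\chi_E$ that still needs to be transported is $\eta:=\chi_{E\setminus E'}+\rho$, which satisfies $\eta\le\chi_E$ and $\|\eta\|\le 2|E\setminus E'|\le 2|E\Delta E'|$.

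Next I would transport $\eta$ cheaply, reusing the cube construction from the proof of Proposition \ref{prop:Wb}. Partition $\R^d$ into cubes $(Q_i)_i$ of sidelength $\ell:=\bigl(3(|E|+|E'|)\bigr)^{1/d}$. In each cube $\eta(Q_i)\le|E\cap Q_i|\le|E|$ and $\mu_0(Q_i)\le|E'|$, so $|E^c\cap Q_i|-\mu_0(Q_i)\ge\ell^d-|E|-|E'|\ge\eta(Q_i)$; thus $\eta\rst Q_i$ may be transported, inside $Q_i$, onto some $\sigma_i\le(\chi_{E^c}-\mu_0)\rst Q_i$ of the same mass at cost at most $\eta(Q_i)(\sqrt d\,\ell)^p$. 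Summing over $i$ produces a plan $\pi_1$ from $\eta$ to $\mu_1:=\sum_i\sigma_i\le\chi_{E^c}-\mu_0$ with $\int|x-y|^p\,d\pi_1\le(\sqrt d\,\ell)^p\|\eta\|\le C(d,p)(|E|+|E'|)^{p/d}|E\Delta E'|$. The plan $\pi_0+\pi_1$ has first marginal $\chi_E$ and second marginal $\mu_0+\mu_1\le\chi_{E^c}$ of total mass $|E|$, so by \eqref{eq:equalrelaxW},
\[
\cWp^p(E)=\widetilde{\cWp}(E)^p\le W_p^p(E,\mu_0+\mu_1)\le\int|x-y|^p\,d(\pi_0+\pi_1)\le\cWp^p(E')+C(|E|^{p/d}+|E'|^{p/d})|E\Delta E'|,
\]
where we also used $(a+b)^{p/d}\le C(p,d)(a^{p/d}+b^{p/d})$. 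This gives \eqref{eq:LipW}.

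For \eqref{eq:LipW2} I would only invoke the elementary inequality $|a^\alpha-b^\alpha|\le\alpha\max(a^{\alpha-1},b^{\alpha-1})|a-b|$ valid for all $a,b\ge0$, applied with $a=\sum_i\cWp^p(E^i)$ and $b=\sum_i\cWp^p((E')^i)$: for $a\ge b$ one writes $a^\alpha-b^\alpha=\alpha\int_b^a t^{\alpha-1}\,dt$ and bounds $t^{\alpha-1}$ by $a^{\alpha-1}$ when $\alpha\ge1$ and by $b^{\alpha-1}$ when $\alpha<1$, which is precisely $\max(a^{\alpha-1},b^{\alpha-1})$ in each case. The main obstacle is the cube step of \eqref{eq:LipW}: one must secure enough free volume in $E^c$, disjoint from the already-used mass $\mu_0$, to dispose of the leftover $\eta$ at a cost governed only by $|E|^{p/d}+|E'|^{p/d}$ and $|E\Delta E'|$, which is exactly why $\ell$ is chosen of order $(|E|+|E'|)^{1/d}$; working with the relaxed energy $\widetilde{\cWp}$ rather than with genuine competitor sets is what makes this legitimate, since $\mu_0$ need not be (a multiple of) a characteristic function.
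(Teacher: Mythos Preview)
Your proof is correct and follows the same overall strategy as the paper: recycle the optimal transport for one set as a competitor for the other, and dispose of the small leftover mass via a cube partition at cost proportional to $|E\Delta E'|$ times the $p$-th power of the cube sidelength. The implementation, however, differs in a useful way. The paper first reduces to bounded sets (so that an optimal transport \emph{map} exists even for $p=1$), rescales to $|E'|=1$, and then carefully decomposes $E'$ according to the preimage under the optimal map for $E$. You instead work directly with transport \emph{plans} and with the relaxed energy $\widetilde{\cWp}$ from \eqref{eq:equalrelaxW}: restricting $\pi'$ to $(E\cap E')\times E^c$ yields a partial admissible plan for $E$, and the leftover $\eta=\chi_{E\setminus E'}+\rho$ is controlled by $2|E\setminus E'|$ through the observation $|F'\cap E|\le|E\setminus E'|$. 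This avoids both the reduction to bounded sets and the appeal to the existence of optimal maps, at the price of producing a competitor measure $\mu_0+\mu_1$ rather than a competitor set, which is exactly what the identity $\cWp=\widetilde{\cWp}$ is designed to allow. The paper's route gives the slightly sharper one-sided bound $\cWp^p(E')-\cWp^p(E)\le C|E'|^{p/d}|E'\setminus E|$, but both symmetrize to \eqref{eq:LipW}. For \eqref{eq:LipW2} the two proofs coincide.
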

\begin{proof}
We start with the proof of  \eqref{eq:LipW}. Thanks to Remark \ref{rem:contR} we may assume that $E$ and $E'$ are bounded sets. By symmetry of the roles of $E$ and $E'$, it is sufficient to show that 
	\begin{equation}\label{eq:toproveLip}
	\mathcal{W}_p^p(E') - \mathcal{W}^p_p(E) \leq C|E'|^{\frac{p}{d}} |E' \setminus E|.
	\end{equation}
	\noindent By scaling we may assume that $|E'|=1$. Let $F$ with $|E\cap F|=0$ be such that $\mathcal{W}^p(E) = W_p^p(E, F)$.
	Let $T_E$ be an optimal transport map from $E$ to $F$  (which exists   by \cite[Theorem 2.44 \& Theorem 2.50]{Viltop} since $E$ and $F$ are bounded), and denote $T_F=T_E^{-1}$ which is an optimal transport map from $F$ to $E$.
	We define $\widetilde{F} = F \setminus E'$, set $F^-= T_E(E')\cap \widetilde{F}$ and decompose $E'$ as 
	\[
	E' = (E'\cap T_F(\widetilde{F})) \cup (E' \setminus T_F(\widetilde{F}))
	\]
	so that $T_E(E'\cap T_F(\widetilde{F}))=F^-$.
	Our goal is now to construct a set $F^+\subset (E'\cup F^{-})^c$ and a map $T^+$ from $E' \setminus T_F(\widetilde{F})$ to $F^+$ with controlled transport cost.   
	We proceed as in the proof of \eqref{nrj_bdd} and consider a partition of $\R^d$ into cubes $(Q_i)_{i \geq 1}$ of sidelength $\ell = 3^{1/d}$. We thus have for every 
	$i \geq 1$, 
	\[ |Q_i|-|E'\cap Q_i|-|F^{-}\cap Q_i| \geq |E' \setminus T_F(\widetilde{F})|.\]
	Therefore, for any $i \geq 1$, there exists $F_i\subset Q_i\cap (E'\cup F^{-})^c$ such that  $|F_i| = | (E' \setminus T_F(\widetilde{F}))\cap Q_i|$ and an optimal transport map $T_i$ from $(E' \setminus T_F(\widetilde{F}))\cap Q_i$ to $F_i$. 
	We set $F^+ = \textstyle\cup_{i \geq 1} F_i$ and define  $T^+$ from $E' \setminus T_F(\widetilde{F})$ to $F^+$ by setting its restriction on any $Q_i$ to be $T_i$. By construction,
	\[
	 \sup_{E' \setminus T_F(\widetilde{F})} |T^+-x|\le C.
	\]
	We can now set $T= T_E$ on $E'\cap T_F(\widetilde{F})$ and $T=T^+$ on $E' \setminus T_F(\widetilde{F})$ and obtain
	\[
	\begin{aligned}
	\mathcal{W}_p^p(E') - \mathcal{W}_p^p(E) &\leq \int_{E'\cap T_F(\widetilde{F})}| T_E-x|^p  + \int_{E' \setminus T_F(\widetilde{F})}|T^+-x|^p - \mathcal{W}_p^p(E) \\
	&\leq \int_{E' \setminus T_F(\widetilde{F})}|T^+-x|^p\\
	&\leq C |E' \setminus  T_F(\widetilde{F})|.
	\end{aligned}
	\]
	\noindent
	We finally observe that 
	\[
	\begin{aligned}
	|E' \setminus  T_F(\widetilde{F})|&\leq |E' \setminus E| + |E \backslash T_F(\widetilde{F})|\\
	&= |E' \setminus E| + |E| - |F\backslash E' | \\
	&\leq |E'\setminus E| +  |E' \cap F| \\
	&\leq 2 |E' \setminus E|.\\
	\end{aligned}
	\]
	\noindent This proves \eqref{eq:toproveLip}.
	\bigskip

 We now turn to \eqref{eq:LipW2}. For this we simply use the fact that there exists $C=C(\alpha)>0$ such that for every $a>0$ and $b>0$
 \[
  | a^\alpha-b^\alpha|\le C \max(a^{\alpha-1},b^{\alpha-1}) |a-b|. 
 \]
\end{proof}
From \eqref{eq:LipW2}, we see that  in order to obtain a good Lipschitz bound for  $E\mapsto\cWpa{E}$ when $\alpha< 1$ (recall that we are particularly interested in the case $\alpha=\frac{1}{p}\le 1$), we will need a control from below on the transport term. 
This is obtained through the following interpolation result between the perimeter and $\cWp$. 
\begin{proposition}\label{prop:interpol}
 There exists a constant $C=C(d)>0$ such that for every family of sets $(E^i)_{i\ge 1}$ we have 
 \begin{equation}\label{eq:interpol}
  \lt(\sum_i \cWp^p(E^i)\rt)^{\frac{1}{p}}\lt(\sum_i P(E^i)\rt) \ge C \lt(\sum_i |E^i|\rt)^{1+\frac{1}{p}}
 \end{equation}

\end{proposition}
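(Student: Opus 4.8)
The plan is to reduce \eqref{eq:interpol} to the one-set statement $\cWp^p(E)\ge c\,|E|^{p+1}/P(E)^p$ by H\"older's inequality, and to prove the latter by bounding $\cWp^p(E)$ from below by $\int_E\dist(x,E^c)^p\,dx$ and then estimating this integral geometrically.

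\smallskip
\noindent\textbf{Reduction to one set.} Assume $\cWp^p(E)\ge c\,|E|^{p+1}/P(E)^p$ for every set $E$, with $c=c(d)>0$ (the cases $|E|=0$ or $P(E)=\infty$ being trivial). Setting $a_i=|E^i|$, $b_i=P(E^i)$, H\"older's inequality with exponents $p+1$ and $\tfrac{p+1}{p}$ gives
\[
\sum_i a_i=\sum_i\frac{a_i}{b_i^{p/(p+1)}}\,b_i^{p/(p+1)}\le\Bigl(\sum_i\frac{a_i^{p+1}}{b_i^{p}}\Bigr)^{\!\frac{1}{p+1}}\Bigl(\sum_i b_i\Bigr)^{\!\frac{p}{p+1}},
\]
hence $\sum_i\cWp^p(E^i)\ge c\sum_i a_i^{p+1}/b_i^{p}\ge c\,(\sum_i a_i)^{p+1}/(\sum_i b_i)^{p}$, which is \eqref{eq:interpol} with $C=c^{1/p}$.

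\smallskip
\noindent\textbf{The one-set inequality.} If $F$ is admissible in \eqref{def:Wp} and $\pi$ transports $E$ onto $F$, its second marginal gives no mass to $E$, so $|x-y|\ge\dist(x,\R^d\setminus E)$ for $\pi$-a.e.\ $(x,y)$; integrating and taking the infimum over $F$ yields $\cWp^p(E)\ge\int_E\dist(x,E^c)^p\,dx$. Now set $\tau:=|E|/(2P(E))$ and $E_{-t}:=\{x\in E:\dist(x,E^c)>t\}$. Since the integrand is $\ge\tau^p$ on $E_{-\tau}$, it suffices to know $|E_{-\tau}|\ge|E|/2$, which follows from the collar bound
\[
|E\setminus E_{-t}|\ \le\ t\,P(E)\qquad(t>0):
\]
indeed it gives $\int_E\dist(x,E^c)^p\,dx\ge\tau^p|E|/2=2^{-(p+1)}|E|^{p+1}/P(E)^p$, i.e.\ the claim with $c=2^{-(p+1)}$. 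To prove the collar bound one applies the coarea formula to the $1$-Lipschitz function $v:=\min\{\dist(\cdot,E^c),t\}$: one has $|\nabla v|=\mathbf{1}_{E\setminus E_{-t}}$ a.e.\ and $\{v>s\}=E_{-s}$ for $0<s<t$, whence $|E\setminus E_{-t}|=\int_0^tP(E_{-s})\,ds$, and it remains to invoke that the perimeter of the inner parallel sets does not increase, $P(E_{-s})\le P(E)$ for a.e.\ $s$.

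\smallskip
\noindent\textbf{Main obstacle.} The delicate point is this collar bound, together with the choice of representative of $E$. As stated it requires $E$ to agree with its topological interior up to a null set: it fails, for instance, when a small volume is removed from a ball as a dense cloud of tiny holes (then $E_{-t}=\emptyset$ for every $t$, although $\cWp^p(E)$ remains large because the holes cannot absorb much mass). Thus the argument above proves the one-set inequality cleanly for, say, bounded open sets -- which suffices for the (smooth) generalized minimizers studied in the paper -- and for these the only nontrivial ingredient is the monotonicity $P(E_{-s})\le P(E)$, obtainable from the area formula for the nearest-point projection onto $\partial E$ (or quotable from the theory of sets of finite perimeter). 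Extending the bound to arbitrary finite-perimeter sets, where the distance-function lower bound degenerates, requires a more global argument -- e.g.\ comparing $\cWp^p(E)$ with $\cWp^p$ of a "de-porosified" enlargement of $E$ and controlling the error via the $L^1$-Lipschitz continuity of $\cWp^p$ (Lemma \ref{wassloc}) -- and this is where the real work is concentrated.
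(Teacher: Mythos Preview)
Your H\"older reduction from the family to a single set is fine, and the bound $\cWp^p(E)\ge\int_E\dist(x,E^c)^p\,dx$ is correct for the open representative. The gap lies in the collar bound, and it is more serious than a choice-of-representative issue: the monotonicity $P(E_{-s})\le P(E)$ is \emph{false} already for smooth bounded open sets. If $E=B_R\setminus\bigcup_{i=1}^N\overline{B_r(x_i)}\subset\R^2$ with $N\ge 2$ well-separated interior holes, then $E_{-s}=B_{R-s}\setminus\bigcup_i\overline{B_{r+s}(x_i)}$ and $P(E_{-s})=P(E)+2\pi(N-1)s>P(E)$. The nearest-point projection from $\partial E_{-s}$ onto $\partial E$ is not a contraction here (it expands the outer component by the factor $R/(R-s)$), so the area-formula argument you invoke does not apply. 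Worse, the intermediate inequality $\int_E\dist(x,E^c)^p\,dx\ge c\,|E|^{p+1}/P(E)^p$ is itself false for open sets: placing $N$ holes of radius $N^{-10}$ on a $CN^{-1/2}$-net of the unit disk gives $|E|\to\pi$ and $P(E)\to 2\pi$, yet $\int_E\dist(x,E^c)^p\,dx\le CN^{-p/2}\to 0$. So the distance-function route cannot reach the one-set inequality even for smooth open $E$; any de-porosification fix via Lemma~\ref{wassloc} would have to bypass the distance integral entirely (comparing $\cWp^p(E)$ directly with $\cWp^p$ of a filled set and then controlling the perimeter of the latter), and you have not indicated how to construct such a set in general.

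The paper's proof avoids the distance function altogether. It first reduces to $p=1$ via $\mathcal{W}_1(E)\le\cWp(E)\,|E|^{1-1/p}$ together with a second H\"older on the sums, and then proves $\mathcal{W}_1(E)\ge Cr\big(|E|-CrP(E)\big)$ for every $r>0$ by Kantorovich duality: setting $\phi_r=\eta_r*\chi_E$ one has $|\nabla\phi_r|_\infty\le C/r$, so $Cr\phi_r$ is an admissible $1$-Lipschitz potential, while the BV-type estimate $\int\phi_r\chi_{E^c}\le CrP(E)$ holds for \emph{every} set of finite perimeter. Summing over $i$ and optimizing $r\sim\sum_i|E^i|/\sum_iP(E^i)$ gives \eqref{eq:interpol}. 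The key conceptual difference is that the mollified indicator measures the perimeter as a measure-theoretic quantity and is insensitive to tiny holes of small total perimeter, whereas the pointwise distance to $E^c$ is not.
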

\begin{proof}
Since by H\"older inequality, $\mathcal{W}_1(E)\le \cWp(E)|E|^{1-\frac{1}{p}}$, using H\"older inequality once more for the sum we see that it is enough to prove \eqref{eq:interpol} for $p=1$. Let $E$ be a set of finite perimeter and volume. We will first show that there exists  $C=C(d)>0$ such that
	\begin{equation}\label{eq:lowbound}
	\mathcal{W}_1(E) \geq C r(|E| - Cr P(E)).
	\end{equation}
	\noindent Take $\eta$ a standard mollifier, rescale it by setting $\eta_r(x) = r^{-d}\eta(x/r)$ and consider $\phi_r = \eta_r \ast \chi_E$. Using Young's inequality,
	we have
	\[
	|\nabla \phi_r |_{\infty} \leq |\chi_E|_{\infty} |\nabla \eta_r|_1 \leq C r^{-1}.
	\]
	\noindent Therefore, by Kantorovich duality for $W_1$, we obtain using $F\subset E^c$,
	\[
	\mathcal{W}_1(E) = W_1(E, F)= \sup_{|\nabla \psi|\leq 1} \int \psi (\chi_E - \chi_{F})
	\geq C \int r \phi_r (\chi_E - \chi_{F})
	\geq C r \int \phi_r (\chi_E - \chi_{E^c}).
	\]
	\noindent Since $\int \phi_r=|E|$,
	\[	
	\int \phi_r \chi_E = |E| - \int \phi_r (1-\chi_E)=|E|-\int \phi_r \chi_{E^c},
	\]
	\noindent so that 
	\[
	\mathcal{W}_1 (E) \geq C r \bigg(|E| - 2 \int \phi_r \chi_{E^c}\bigg).
	\]
	\noindent We now re-express the term $\textstyle{\int} \phi_r \chi_{E^c}$ in order to bound it by the perimeter of $E$ : 	
	\[
	\begin{aligned}
	\int \phi_r \chi_{E^c} &= \iint \eta_r(y-x) \chi_E(x) \chi_{E^c}(y) \, \mathrm{d}x\mathrm{d}y \\
	&=\frac{1}{2} \iint \eta_r(x-y)|\chi_E(x) - \chi_{E}(y)|\, \mathrm{d}x\mathrm{d}y \\
	&= \frac{1}{2} \iint \eta_r (z) | \chi_E(x) - \chi_E(x+z)| \, \mathrm{d}x\mathrm{d}z \\
	&\leq C P(E) \int |z| \eta_r (z) \, \mathrm{d}z\\
	&\leq C r P(E).
	\end{aligned}
	\]
	This proves \eqref{eq:lowbound}.
	\noindent Let now $(E^i)_{i\ge 1}$ be a family of sets and let us show \eqref{eq:interpol}. We may assume that $\sum_i P(E^i)+|E^i|<\infty$ since otherwise there is nothing to prove. 
	Summing \eqref{eq:lowbound} over $i \geq 1$ yields
	\[
	\sum_{i \geq 1}\mathcal{W}_1(E^i) \geq C r \bigg(\sum_{i \geq 1} |E^i| - C r \sum_{i \geq 1} P(E^i) \bigg).
	\]
	\noindent We conclude the proof by taking
	\[
	r = \eps \frac{\sum_{i \geq 1} |E^i|}{\sum_{i \geq 1} P(E^i)},
	\]
	\noindent with $\eps$ chosen small enough so that $\eps C \leq 1/2$.
 \end{proof}

\section{Existence of minimizers}\label{sec:exis}
In this section we prove Theorem \ref{exis_mini}. As already explained in the introduction, we will first prove the existence of generalized minimizers and then prove that they are $\Lambda-$minimizers
of the perimeter to obtain a bound on their diameter which readily implies the existence of minimizers in a classical sense.
\subsection{Existence of generalized minimizers}
We start with some notation.  For a set $E$ we define the energy (we keep the dependence in $p$ and $\alpha$ implicit)
\[
 \E_\lambda(E)= P(E)+\lambda \cWpa{E}.
\]
We call a family  $\widetilde{E}=(E^i)_{i\ge 1}$ a generalized set and define the generalized energy as 
\begin{equation}\label{def:gener}
 \widetilde{\E}_\lambda(\widetilde{E})=\sum_i P(E^i)+\lambda\lt[\sum_i \cWp^p(E^i)\rt]^\alpha.
\end{equation}
We say that $\widetilde{E}$ is a generalized minimizer if $\sum_i |E^i|=\omega_d$ and 
\[
 \widetilde{\E}_\lambda(\widetilde{E})=\inf\lt\{\widetilde{\E}_\lambda(\widetilde{E}') \ : \ \sum_i |(E')^i|=\omega_d\rt\}.
\]
\begin{proposition}\label{prop:existgenmin}
 For every $d\ge 2$, $p\ge 1$, $\alpha>0$ and $\lambda>0$, there exists generalized minimizers and 
 \begin{equation}\label{eq:equalinf}
  \inf\lt\{ \E_\lambda(E) \ : |E|=\omega_d\rt\}=\inf\lt\{ \widetilde{\E}_\lambda(\widetilde{E}) \ : \ \sum_i |E^i|=\omega_d\rt\}.
 \end{equation}

\end{proposition}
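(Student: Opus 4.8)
The plan is to establish both inequalities in \eqref{eq:equalinf} and, along the way, to exhibit a generalized minimizer. Throughout write $m=\inf\{\E_\lambda(E):|E|=\omega_d\}$ and $\widetilde m=\inf\{\widetilde\E_\lambda(\widetilde E):\sum_i|E^i|=\omega_d\}$, and note that $m<\infty$ since the unit ball is admissible with finite energy by \eqref{nrj_bdd}. The inequality $\widetilde m\le m$ is immediate: a set $E$ with $|E|=\omega_d$ is the generalized set $(E,\emptyset,\emptyset,\dots)$, on which $\widetilde\E_\lambda$ agrees with $\E_\lambda$. For the reverse inequality $m\le\widetilde m$ I would fix an admissible generalized set $\widetilde E=(E^i)_i$ with $\widetilde\E_\lambda(\widetilde E)<\infty$, so that $\sum_iP(E^i)<\infty$ and $\sum_i\cWp^p(E^i)<\infty$, and build a classical competitor: keep finitely many components $E^1,\dots,E^N$ carrying all but a volume $\delta$ of the mass; truncate each to $E^i\cap B_R$ along a radius $R$ that is simultaneously good for all of them in the sense that $\h^{d-1}(E^i\cap\partial B_R)$ is small (possible along a sequence $R\to\infty$ since $\int_0^\infty\h^{d-1}(E^i\cap\partial B_r)\,dr=|E^i|<\infty$), which makes the truncation cost in perimeter negligible while $\cWp^p(E^i\cap B_R)\le\cWp^p(E^i)$ stays monotone and converges to $\cWp^p(E^i)$ by Remark \ref{rem:contR}; then translate these bounded sets so they are pairwise at distance much larger than $\omega_d^{1/d}$, take their union, and dilate by the factor $\rho\to1$ needed to restore the volume constraint. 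By translation invariance and the exact scaling of $P$ and of $\cWp$ under dilations, together with the additivity of $\cWp^p$ for far-apart sets (Proposition \ref{prop:super}(ii), which survives the small dilation since the pieces remain far apart), the energy of this competitor tends to $\sum_iP(E^i)+\lambda[\sum_i\cWp^p(E^i)]^\alpha=\widetilde\E_\lambda(\widetilde E)$ as $\delta\to0$ and $N\to\infty$. Hence $m\le\widetilde\E_\lambda(\widetilde E)$ for every admissible $\widetilde E$, so $m=\widetilde m$.

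Next I would run a concentration–compactness argument on a minimizing sequence. Take classical sets $E_n$ with $|E_n|=\omega_d$ and $\E_\lambda(E_n)\to m$; since $\cWpa{E_n}\ge0$ the perimeters $P(E_n)$ are uniformly bounded. In the spirit of \cite{golnov,FraLieb,KnMuNov,novprat}, but relying only on the relative isoperimetric inequality (see \cite{Maggi}) rather than on Almgren's nucleation lemma, I would extract a subsequence, translations $(x_n^i)_n$ with $|x_n^i-x_n^j|\to\infty$ for $i\ne j$, and sets $E^i$ with $E_n-x_n^i\to E^i$ in $L^1_{loc}$, such that $\liminf_nP(E_n)\ge\sum_iP(E^i)$. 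The relative isoperimetric inequality enters at each step of the iterated extraction to select a good radius along which the already-captured piece can be cut off at asymptotically no perimeter cost; combined with the uniform perimeter bound and the plain isoperimetric inequality $P(\cdot)\ge c_d|\cdot|^{\frac{d-1}{d}}$, it also rules out loss of mass, since a residual chunk of positive volume spread thinly across $\R^d$ would force the perimeter to blow up. Consequently $\sum_i|E^i|=\omega_d$, so $\widetilde E=(E^i)_i$ is an admissible generalized set.

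Finally I would check the lower semicontinuity of the nonlocal term along this decomposition and conclude. For fixed $k$ and $R$, the balls $B_R(x_n^1),\dots,B_R(x_n^k)$ are pairwise disjoint for $n$ large, so monotonicity and superadditivity of $\cWp^p$ (Proposition \ref{prop:super}(i), \eqref{repulsive}) give $\cWp^p(E_n)\ge\sum_{j=1}^k\cWp^p(E_n\cap B_R(x_n^j))$; by translation invariance and the $L^1_{loc}$-lower semicontinuity of $\cWp$ (\eqref{semicont}) each term satisfies $\liminf_n\cWp^p(E_n\cap B_R(x_n^j))\ge\cWp^p(E^j\cap B_R)$, and letting $R\to\infty$ (Remark \ref{rem:contR}) and then $k\to\infty$ yields $\liminf_n\cWp^p(E_n)\ge\sum_i\cWp^p(E^i)$; composing with the increasing continuous map $t\mapsto t^\alpha$ gives $\liminf_n\cWpa{E_n}\ge[\sum_i\cWp^p(E^i)]^\alpha$. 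Since $\liminf(a_n+b_n)\ge\liminf a_n+\liminf b_n$, we obtain $m=\lim_n\E_\lambda(E_n)\ge\sum_iP(E^i)+\lambda[\sum_i\cWp^p(E^i)]^\alpha=\widetilde\E_\lambda(\widetilde E)\ge\widetilde m=m$, hence $\widetilde\E_\lambda(\widetilde E)=\widetilde m$; that is, $\widetilde E$ is a generalized minimizer, and \eqref{eq:equalinf} is exactly the identity $m=\widetilde m$ already established.

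The genuinely delicate point is the concentration–compactness extraction of the second paragraph: organizing the iteration of "translate and subtract a piece", choosing the cutting radii via the relative isoperimetric inequality so that the perimeter does not jump, and proving that no mass escapes to infinity. Once the decomposition $(E^i)_i$ with conserved mass and $\liminf_nP(E_n)\ge\sum_iP(E^i)$ is in hand, everything else is routine bookkeeping with the properties of $\cWp$ gathered in Section \ref{sec:nonlocal}.
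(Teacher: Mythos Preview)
Your proposal is correct and follows the same overall concentration--compactness strategy as the paper. The one substantive difference is in how the extraction is organized: you sketch an iterated ``translate, cut off a piece, subtract'' procedure in the style of Lions, whereas the paper partitions $\R^d$ into unit cubes $Q_{n,i}$, orders them so that $m_{n,i}=|E_n\cap Q_{n,i}|$ is decreasing, and uses the relative isoperimetric inequality cube by cube to get $\sum_i m_{n,i}^{(d-1)/d}\le CP(E_n)$; combined with $\sum_i m_{n,i}=\omega_d$ this yields the tail bound $\sum_{i\ge I}m_{n,i}\le C(1+\lambda)I^{-1/d}$, which is uniform tightness and makes the no-mass-lost conclusion immediate without any iteration. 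The pieces $E^i$ then arise as $L^1_{loc}$ limits of $E_n-z_{n,i}$ with $z_{n,i}\in Q_{n,i}$, grouped into equivalence classes according to whether $|z_{n,i}-z_{n,j}|$ stays bounded. The paper's version is more self-contained and quantitative; your version is standard and equally valid once fleshed out. One small slip in your second paragraph: the selection of good cutting radii is a co-area argument (exactly as you used it in the first paragraph), not a consequence of the relative isoperimetric inequality --- the latter's role is precisely the cube-wise control of mass described above.
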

\begin{proof}
 We start by pointing out that using Proposition \ref{prop:super} and a simple rescaling argument (see for instance \cite{XiaZhou}), it is not hard to modify
 a generalized minimizing sequence into a classical minimizing sequence so that \eqref{eq:equalinf} holds. 
 
By \eqref{eq:equalinf}, in order to prove the existence of a generalized minimizer we  can consider a classical minimizing sequence $(E_n)_{n\ge 1}$ such that 
\[
 \lim_{n\to \infty} \E_\lambda(E_n)=\inf\lt\{ \widetilde{\E}_\lambda(\widetilde{E}) \ : \ \sum_i |E^i|=\omega_d\rt\}.
\]
We now follow relatively closely the proof of \cite[Theorem 4.9]{golnov}. We first notice that using the unit ball $B_1$ as competitor, 
we have $\lim_{n\to\infty}\E_\lambda(E_n)\le \E_\lambda(B_1)\le C(1+\lambda)$. For every $n\ge 1$, let $Q_{n,i}$ be a partition of $\R^d$ into disjoint cubes  of sidelength $2$ and such that 
\[
 m_{n,i}=|E_n\cap Q_{n,i}|
\]
is a decreasing sequence. By the relative  isoperimetric inequality we have 
\[
 \sum_i m_{n,i}^{\frac{d-1}{d}}\le C \sum_i P(E_n,Q_{n,i})=CP(E_n)\le C(1+\lambda).
\]
Since $\sum_i m_{n,i}=\omega_d$, we have for every $I\ge 1$, and every $i\ge I$, $m_i\le m_I\le \omega_d/I$ and thus
\[
\sum_{i\ge I} m_{n,i}=\sum_{i\ge I} m_{n,i}^{\frac{d-1}{d}} m_{n,i}^{\frac{1}{d}}\le C I^{-\frac{1}{d}}\sum_{i\ge I} m_{n,i}^{\frac{d-1}{d}}\le C(1+\lambda) I^{-\frac{1}{d}}.
\]
This proves uniform tightness of $m_{n,i}$ and thus up to extraction we may assume that for every $i$, $m_{n,i}\to m_i$ with $\sum_i m_i= \omega_d$. Let now $z_{n,i}\in Q_{n,i}$. Up to a further extraction we may assume that for every
$i,j$,  $|z_{n,i}-z_{n,j}|\to c_{ij}\in[0,\infty]$ and $E_n-z_{n,i}\to E^i$ in $L^1_{loc}(\R^d)$. We now introduce an equivalence class by saying that $i\sim j$ if $c_{ij}<\infty$
and denote by $[i]$ the equivalence class of $i$. Notice that if $i\sim j$, $E^i$ and $E^j$ coincide up to a translation. For every equivalence class $[i]$ let $m_{[i]}=\sum_{j\in [i]} m_j$ so that 
\[
 \sum_{[i]} m_{[i]}=\sum_i m_i=\omega_d.
\]
By the $L^1_{loc}$ convergence of $E_n-z_{n,i}$ to $E^i$ and the definition of the equivalence relation, we have for every $j\in [i]$, $|E^j|=m_{[i]}$. 
Up to a relabeling we may now assume that there is a unique element $E^i$ in each equivalence class. We have thus constructed a generalized set $\widetilde{E}=(E^i)_{i\ge 1}$ such that $\sum_i |E^i|=\omega_d$. We are left with the proof of 
\begin{equation}\label{eq:toprovegen}
 \widetilde{\E}_\lambda(\widetilde{E})\le \liminf_{n\to \infty} \E_\lambda(E_n)=\inf\lt\{ \widetilde{\E}_\lambda(\widetilde{E}) \ : \ \sum_i |E^i|=\omega_d\rt\}.
\end{equation}
To this aim let $I\in \N$. And let $z_{n,1}, \cdots, z_{n,I}$ be as before such that $E_n-z_{n,i}$ to $E^i$ and $|z_{n,i}-z_{n,j}|\to \infty$ if $i\neq j$. 
For every $R>0$, if $n$ is large enough, $\min_{i\neq j} |z_{n,i}-z_{n,j}|\ge 4R$. By the co-area formula, for every $n$ there is $R_n\in(R,2R)$ such that 
\[
 \sum_{i=1}^I \mathcal{H}^{d-1}(\partial B_{R_n}(z_{n,i})\cap E_n)\le \frac{C}{R}.
\]
We now define $E^{i,R_n}=(B_{R_n}(z_{n,i})\cap E_n) -z_{n,i}$ so that on the one hand,
\begin{equation}\label{bound:per}
 \sum_{i=1}^I P(E^{i,R_n})\le P(E_n)+ \frac{C}{R}
\end{equation}
and on the other hand by \eqref{repulsive},
\[
 \sum_{i=1}^I \cWp^p(E^{i,R_n})\le \cWp^p\lt(\cup_{i=1}^I B_{R_n}(z_{n,i})\cap E_n\rt)\le  \cWp^p(E_n).
\]
From the bound \eqref{bound:per}, we conclude that up to extraction, $E^{i,R_n}$ converges in $L^1$ to a set $E^{i,R}$ as $n\to \infty$. Moreover, from the $L^1_{loc}$ convergence of $E_n-z_{n,i}$ to $E^i$
it is not hard to see that also $E^{i,R}$ converges to $E^i$ in $L^1$ as $R\to \infty$. We thus conclude that by lower semi-continuity of the perimeter and \eqref{semicont} that 
\[
  \sum_{i=1}^I P(E^{i,R})+\lambda \lt(\sum_{i=1}^I \cWp^p(E^{i,R})\rt)^\alpha\le \liminf_{n\to \infty} \lt(P(E_n) +\lambda \cWpa{E_n}\rt) +\frac{C}{R}.  
\]
Letting then $R\to \infty$ and finally $I\to \infty$ we conclude the proof of \eqref{eq:toprovegen}.
\end{proof}
Before proceeding further let us study the scaling of the energy.
\begin{proposition}
 For every fixed $d\ge 2$, $p\ge 1$ and $\alpha>0$, there exists $C=C(d,p,\alpha)>0$ such that for every $\lambda>0$,
 \begin{equation}\label{eq:scalingener}
  \frac{1}{C}\lt(1+\lambda\rt)^{\frac{1}{1+\alpha p}}\le \inf_{|E|=\omega_d} \E_\lambda(E)\le C\lt(1+\lambda\rt)^{\frac{1}{1+\alpha p}}.
 \end{equation}
Moreover, if $\widetilde{E}=(E^i)_{i\ge 1}$ is a generalized minimizer, then 
\begin{equation}\label{eq:scalingP}
  \frac{1}{C}\lt(1+\lambda\rt)^{\frac{1}{1+\alpha p}}\le \sum_i P(E^i)\le C\lt(1+\lambda\rt)^{\frac{1}{1+\alpha p}}
\end{equation}
and 
\begin{equation}\label{eq:scalingW}
  \frac{1}{C}\lt(1+\lambda\rt)^{-\frac{ p}{1+\alpha p}}\le \sum_i \cWp^p(E^i)\le C\lt(1+\lambda\rt)^{-\frac{ p}{1+\alpha p}}.
\end{equation}
\end{proposition}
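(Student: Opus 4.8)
The plan is to first establish the upper bound in \eqref{eq:scalingener} by testing \eqref{def:minprob} with a suitable competitor, then obtain the lower bound by a scaling/optimization argument, and finally deduce \eqref{eq:scalingP} and \eqref{eq:scalingW} by examining how the two terms in the energy balance at a minimizer. For the upper bound in \eqref{eq:scalingener}, I would take as competitor a single ball $B_r$ of volume $\omega_d$ split into $N$ disjoint small balls each of radius $\rho = r N^{-1/d}$, placed far apart. Then $P(\cup B_\rho) = N \cdot d\omega_d \rho^{d-1} \sim N^{1/d}$, and by additivity (Proposition \ref{prop:super}(ii)) together with \eqref{nrj_bdd}, $\sum_i \cWp^p(B_\rho) = N\, \cWp^p(B_\rho) \lesssim N \rho^{p+d} \sim N^{-p/d}$. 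Hence the generalized energy of this configuration is $\lesssim N^{1/d} + \lambda N^{-\alpha p/d}$; optimizing over $N\ge 1$ (treating $N$ as a continuous parameter and taking the nearest integer, which only costs a constant) gives $N \sim (1+\lambda)^{\frac{d}{1+\alpha p}}$ and a value $\lesssim (1+\lambda)^{\frac{1}{1+\alpha p}}$. By Proposition \ref{prop:existgenmin}, $\inf_{|E|=\omega_d}\E_\lambda(E) = \inf \widetilde{\E}_\lambda$, so this yields the upper bound.

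For the lower bound in \eqref{eq:scalingener}, let $\widetilde E = (E^i)$ be a generalized minimizer and write $P := \sum_i P(E^i)$, $W := \sum_i \cWp^p(E^i)$. The isoperimetric inequality gives $P \ge \sum_i d\omega_d^{1/d} |E^i|^{(d-1)/d} \ge d \omega_d^{1/d} (\sum_i |E^i|)^{(d-1)/d} = c_d > 0$ since $\sum_i |E^i| = \omega_d$ and $(d-1)/d \le 1$; this already gives $\E_\lambda \ge c_d$, which handles the regime $\lambda \lesssim 1$. For large $\lambda$ I would combine two ingredients: the interpolation inequality Proposition \ref{prop:interpol}, which gives $W^{1/p} P \ge C \omega_d^{1+1/p}$, hence $W \ge C' P^{-p}$; and the trivial bound $\widetilde{\E}_\lambda(\widetilde E) = P + \lambda W^\alpha \ge P + \lambda (C' P^{-p})^\alpha = P + C'' \lambda P^{-\alpha p}$. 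Minimizing the right-hand side over $P > 0$ (the minimum is at $P \sim \lambda^{\frac{1}{1+\alpha p}}$) gives $\widetilde{\E}_\lambda(\widetilde E) \gtrsim \lambda^{\frac{1}{1+\alpha p}}$. Putting the two regimes together yields $\inf \E_\lambda \gtrsim (1+\lambda)^{\frac{1}{1+\alpha p}}$.

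Finally, for \eqref{eq:scalingP} and \eqref{eq:scalingW}: the upper bound $P \le \widetilde{\E}_\lambda(\widetilde E) = \inf \E_\lambda \lesssim (1+\lambda)^{\frac{1}{1+\alpha p}}$ is immediate from \eqref{eq:scalingener}, and the matching lower bound $P \ge c_d$ is the isoperimetric bound above — but to get the sharp lower bound $P \gtrsim (1+\lambda)^{\frac{1}{1+\alpha p}}$ for large $\lambda$ I would argue that since $P + C''\lambda P^{-\alpha p} \le \widetilde{\E}_\lambda(\widetilde E) \lesssim \lambda^{\frac{1}{1+\alpha p}}$, both terms on the left must be $\lesssim \lambda^{\frac{1}{1+\alpha p}}$, and in particular $\lambda P^{-\alpha p} \lesssim \lambda^{\frac{1}{1+\alpha p}}$ forces $P \gtrsim \lambda^{\frac{1}{1+\alpha p}}$. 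Combining with the $\lambda \lesssim 1$ regime gives \eqref{eq:scalingP}. Then \eqref{eq:scalingW} follows: from $\lambda W^\alpha \le \widetilde{\E}_\lambda(\widetilde E) \lesssim (1+\lambda)^{\frac{1}{1+\alpha p}}$ we get $W \lesssim (1+\lambda)^{-\frac{p}{1+\alpha p}}$, and from the interpolation inequality $W \ge C' P^{-p} \gtrsim (1+\lambda)^{-\frac{p}{1+\alpha p}}$ using the just-proven upper bound on $P$.

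The main obstacle I anticipate is making the upper-bound competitor construction clean: one must verify that the $N$ small balls can indeed be placed pairwise far enough apart for Proposition \ref{prop:super}(ii) to apply (the required separation is $\sim \rho$, which is fine) and that rounding $N$ to an integer and the case $N = 1$ (small $\lambda$) are handled without fuss. Everything else reduces to one-variable calculus optimizations of the form $t \mapsto t^a + c\, t^{-b}$, which are routine.
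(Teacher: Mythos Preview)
Your plan is correct and follows essentially the same route as the paper: the $N$-balls competitor for the upper bound, and the interpolation inequality \eqref{eq:interpol} combined with isoperimetry for the lower bounds, with the bounds on $P$ and $W$ read off from the two pieces of the minimal energy. One small point to watch: the inequality $\lambda W^\alpha \le \widetilde{\E}_\lambda(\widetilde E)\lesssim (1+\lambda)^{1/(1+\alpha p)}$ alone does not give $W\lesssim 1$ when $\lambda\le 1$ (it only gives $W\lesssim \lambda^{-1/\alpha}$), so for that regime you need a separate argument---either \eqref{nrj_bdd} together with $\sum_i|E^i|=\omega_d$, or, as the paper does, the direct comparison $\widetilde{\E}_\lambda(\widetilde E)\le \E_\lambda(B_1)$ combined with $\sum_i P(E^i)\ge P(B_1)$, which yields $\sum_i \cWp^p(E^i)\le \cWp^p(B_1)$.
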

\begin{proof}
 Let us first consider the case $\lambda\le 1$. Using the ball $B_1$ as competitor and the isoperimetric inequality we have for every generalized minimizer $\widetilde{E}$,
 \[
  P(B_1)+\lambda \lt[\sum_i \cWp^p(E^i)\rt]^\alpha\le \sum_i P(E^i)+\lambda \lt[\sum_i \cWp^p(E^i)\rt]^\alpha\le P(B_1)+\lambda \cWpa{B_1}.
 \]
From this combined with the isoperimetric inequality we obtain \eqref{eq:scalingener} and \eqref{eq:scalingP} together with $\sum_i \cWp^p(E^i)\le \cWp(B_1)$. 
To obtain the first inequality in \eqref{eq:scalingW} we combine \eqref{eq:interpol} with $\sum_i P(E^i)\le C$.\\

Let now  $\lambda\ge 1$. We  consider the competitor made of $N$ balls $E^i$ of radius $r$ so that the constraint $\sum_i |E^i|=\omega_d$ translates into $Nr^d=1$. The energy of such a competitor is 
\[
 \widetilde{\E}_\lambda(\widetilde{E})= C\lt( r^{-1}+\lambda r^{p\alpha}\rt).
\]
Minimizing in $r$ by choosing $r=\lambda^{-\frac{1}{1+\alpha p}}$ (which is admissible since the corresponding $N$ is large) gives the upper bounds in \eqref{eq:scalingener}, \eqref{eq:scalingP} and \eqref{eq:scalingW}.
Using $\eqref{eq:interpol}$ we see that the upper bound in \eqref{eq:scalingP} gives the lower bound in \eqref{eq:scalingW} and vice-versa. These lower bounds then also imply the lower bound in \eqref{eq:scalingener}.
\end{proof}

\subsection{Quasi-minimality properties of generalized minimizers}
As in many similar variational problems, in order to prove a quasi-minimality property, it will be convenient to relax the volume constraint.
To this aim, for $\Lambda>0$ and $\widetilde{E}$ a generalized set we introduce the penalized  energy
\begin{equation}\label{def:superrelax}
 \widetilde{\E}_{\lambda,\Lambda}(\widetilde{E})=\sum_i P(E^i)+\lambda\lt[\sum_i \cWp^p(E^i)\rt]^\alpha +\Lambda \lt|\sum_i |E_i|-\omega_d\rt|.
\end{equation}
We start by proving that if $\Lambda$ is large enough, then every generalized minimizer is also an unconstrained minimizer of $\widetilde{\E}_{\lambda,\Lambda}$.
\begin{proposition}\label{prop:relax}
 There exists $C=C(d,p,\alpha)>0$ such that for every $\lambda>0$, if $\Lambda\ge C\lt(1+\lambda\rt)^{\frac{1}{1+\alpha p}}$ then every generalized minimizer of \eqref{def:gener} is also a minimizer of $\widetilde{\E}_{\lambda,\Lambda}$.
\end{proposition}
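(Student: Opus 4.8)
The plan is to run the by-now standard penalization argument, exploiting the scaling of the energy under dilations. Let $\widetilde{E}=(E^i)_{i\ge 1}$ be a generalized minimizer of \eqref{def:gener}, let $\widetilde{E}'=((E')^i)_{i\ge 1}$ be an arbitrary generalized set and put $m:=\sum_i|(E')^i|$. We may assume $m<\infty$ and $\widetilde{\E}_\lambda(\widetilde{E}')<\infty$, otherwise $\widetilde{\E}_{\lambda,\Lambda}(\widetilde{E}')=+\infty$ and there is nothing to prove; we may also assume $\widetilde{\E}_\lambda(\widetilde{E}')<\widetilde{\E}_\lambda(\widetilde{E})$, since otherwise $\widetilde{\E}_\lambda(\widetilde{E})\le\widetilde{\E}_\lambda(\widetilde{E}')\le\widetilde{\E}_{\lambda,\Lambda}(\widetilde{E}')$ directly. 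In particular, by \eqref{eq:scalingener}, the competitor satisfies $\sum_i P((E')^i)+\lambda[\sum_i\cWp^p((E')^i)]^\alpha\le C(1+\lambda)^{\frac{1}{1+\alpha p}}$. For $t>0$ write $t\widetilde{E}':=(t(E')^i)_{i\ge 1}$; since $P(t(E')^i)=t^{d-1}P((E')^i)$ and, by scaling of $W_p$, $\cWp^p(t(E')^i)=t^{d+p}\cWp^p((E')^i)$, we have
\[ \widetilde{\E}_\lambda(t\widetilde{E}')=t^{d-1}\sum_i P((E')^i)+\lambda\, t^{\alpha(d+p)}\left[\sum_i\cWp^p((E')^i)\right]^\alpha\qquad\text{and}\qquad \sum_i|t(E')^i|=t^d m, \]
so that the choice $t=(\omega_d/m)^{1/d}$ makes $t\widetilde{E}'$ admissible for \eqref{def:gener}.

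I would then split according to the value of $m$. If $m\ge\omega_d$, take $t=(\omega_d/m)^{1/d}\le 1$; since $d\ge 2$ and $\alpha>0$, both $t^{d-1}\le 1$ and $t^{\alpha(d+p)}\le 1$, hence $\widetilde{\E}_\lambda(t\widetilde{E}')\le\widetilde{\E}_\lambda(\widetilde{E}')$, and minimality of $\widetilde{E}$ gives $\widetilde{\E}_\lambda(\widetilde{E})\le\widetilde{\E}_\lambda(t\widetilde{E}')\le\widetilde{\E}_{\lambda,\Lambda}(\widetilde{E}')$ for every $\Lambda>0$. If $m\le\omega_d/2$, then $|m-\omega_d|\ge\omega_d/2$, and combining $\widetilde{\E}_\lambda(\widetilde{E})\le C(1+\lambda)^{\frac{1}{1+\alpha p}}$ from \eqref{eq:scalingener} with the penalization term, $\widetilde{\E}_\lambda(\widetilde{E})\le\Lambda|m-\omega_d|\le\widetilde{\E}_{\lambda,\Lambda}(\widetilde{E}')$ provided $\Lambda\ge (2C/\omega_d)(1+\lambda)^{\frac{1}{1+\alpha p}}$.

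The remaining case $\omega_d/2<m<\omega_d$ is the genuine one. Here $t=(\omega_d/m)^{1/d}\in(1,2^{1/d})$, still admissible, and the increment $\widetilde{\E}_\lambda(t\widetilde{E}')-\widetilde{\E}_\lambda(\widetilde{E}')$ may now be positive. Setting $s:=t^d=\omega_d/m\in(1,2)$ and using the elementary inequality $s^\beta-1\le C_\beta(s-1)$ for $s\in(1,2)$ with $\beta=\tfrac{d-1}{d}$ and with $\beta=\tfrac{\alpha(d+p)}{d}$, together with $m>\omega_d/2$, one gets $t^{d-1}-1\le C'(\omega_d-m)$ and $t^{\alpha(d+p)}-1\le C'(\omega_d-m)$ for some $C'=C'(d,p,\alpha)$. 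Inserting these bounds into the displayed formula and using the energy bound on $\widetilde{E}'$ from the first paragraph,
\[ \widetilde{\E}_\lambda(t\widetilde{E}')-\widetilde{\E}_\lambda(\widetilde{E}')\le C'(\omega_d-m)\left(\sum_i P((E')^i)+\lambda\left[\sum_i\cWp^p((E')^i)\right]^\alpha\right)\le C''(1+\lambda)^{\frac{1}{1+\alpha p}}(\omega_d-m). \]
Hence, as soon as $\Lambda\ge C''(1+\lambda)^{\frac{1}{1+\alpha p}}$, minimality of $\widetilde{E}$ yields $\widetilde{\E}_\lambda(\widetilde{E})\le\widetilde{\E}_\lambda(t\widetilde{E}')\le\widetilde{\E}_\lambda(\widetilde{E}')+\Lambda(\omega_d-m)=\widetilde{\E}_{\lambda,\Lambda}(\widetilde{E}')$. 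Choosing $C$ to be the largest constant appearing above finishes the proof.

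The only genuinely delicate point is this last case: a competitor whose volume is slightly below $\omega_d$ must be dilated \emph{up}, which a priori raises the energy, and one must verify that the increase is linear in $\omega_d-m$ with a constant comparable to the energy of the competitor --- which, after the reduction $\widetilde{\E}_\lambda(\widetilde{E}')<\widetilde{\E}_\lambda(\widetilde{E})$ and by \eqref{eq:scalingener}, is $\lesssim(1+\lambda)^{\frac{1}{1+\alpha p}}$, exactly the threshold for $\Lambda$ in the statement. Everything else --- the scaling of $W_p$ and the bounds on $s^\beta$ --- is routine.
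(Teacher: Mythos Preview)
Your proof is correct and follows essentially the same rescaling strategy as the paper: dilate a competitor with the wrong volume back to total mass $\omega_d$ and compare. The paper phrases this as a contradiction argument with a single Taylor expansion around $t=1$, whereas you argue directly and split into the three cases $m\ge\omega_d$, $m\le\omega_d/2$, and $\omega_d/2<m<\omega_d$; your treatment of the first case (shrinking only decreases the energy) and the elementary bound $s^\beta-1\le C_\beta(s-1)$ make the sign issues more transparent than in the paper's write-up, but there is no substantive difference in the underlying idea.
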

\begin{proof}
 Let $C_0$ to be fixed below and assume that $\Lambda\ge C_0\lt(1+\lambda\rt)^{\frac{1}{1+\alpha p}}$. By \eqref{eq:scalingener}, if there exists $\widetilde{E}$ such that 
\[
  \widetilde{\E}_{\lambda,\Lambda}(\widetilde{E})< \inf\lt\{ \widetilde{\E}_\lambda(\widetilde{E}') \ : \ \sum_i |(E')^i|=\omega_d\rt\}
\]
we must have 
\begin{equation}\label{eq:Lambda}
 \Lambda \lt|\sum_i |E^i|-\omega_d\rt|\le C\lt(1+\lambda\rt)^{\frac{1}{1+\alpha p}}.
\end{equation}
and $\sum_i |E^i|\neq \omega_d$. Let 
\[
 t =\omega_d^{\frac{1}{d}}\lt(\sum_i |E^i|\rt)^{-d}
\]
so that $t\widetilde{E}=(t E^i)_{i\ge 1}$ satisfies $\sum_i |t E^i|=\omega_d$. From \eqref{eq:Lambda}, we see that $t=1+\eps$ with
$|\eps|\le C\Lambda^{-1} \lt(1+\lambda\rt)^{\frac{1}{1+\alpha p}}$. By hypothesis we have 
\[
  \widetilde{\E}_{\lambda,\Lambda}(\widetilde{E})<  \widetilde{\E}_{\lambda}(t\widetilde{E})= (1+\eps)^{d-1}\sum_i P(E^i) +\lambda (1+\eps)^{(d+p)\alpha}\lt[\sum_i \cWp^p(E^i)\rt]^\alpha.
\]
By Taylor expansion we have for $\Lambda\ge C\lt(1+\lambda\rt)^{\frac{1}{1+\alpha p}}$,
\[
 \Lambda \eps< C\eps \lt(\sum_i P(E^i) +\lt[\sum_i \cWp^p(E^i)\rt]^\alpha\rt)\le C\eps\lt(1+\lambda\rt)^{\frac{1}{1+\alpha p}}.
\]
This gives the bound $\Lambda\le C\lt(1+\lambda\rt)^{\frac{1}{1+\alpha p}}$ which yields the conclusion provided $C_0>C$.
\end{proof}
Combining Lemma \ref{wassloc} together with Proposition \ref{prop:relax} we may now prove that generalized minimizers are $\Lambda-$minimizers of the perimeter.
\begin{proposition}\label{prop:Lambdamin}
 There exists $C=C(d,p,\alpha)>0$ such that if $\Lambda\ge C\lt(1+\lambda\rt)^{\frac{1+p}{1+\alpha p}}$, every generalized minimizer
 $\widetilde{E}=(E^i)_{i\ge 1}$ of $\widetilde{\E}_\lambda$ is a $\Lambda-$minimizer of the perimeter in the sense that for every $i\ge 1$ and every set $E\subset \R^d$,
 \begin{equation}\label{eq:Lambdamin}
  P(E^i)\le P(E)+\Lambda |E^i\Delta E|.
 \end{equation}

\end{proposition}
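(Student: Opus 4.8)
The plan is to run the by-now standard penalization argument. First, Proposition~\ref{prop:relax} lets us drop the volume constraint: if $C_0$ denotes the constant there, then since $\frac{1+p}{1+\alpha p}\ge\frac1{1+\alpha p}$ and $1+\lambda\ge1$, choosing the constant $C$ in the statement $\ge 2C_0$ ensures $\Lambda/2\ge C_0(1+\lambda)^{\frac1{1+\alpha p}}$, hence every generalized minimizer $\widetilde E=(E^j)_{j\ge1}$ of $\widetilde\E_\lambda$ also minimizes $\widetilde\E_{\lambda,\Lambda/2}$ among all generalized sets. Fix such an $\widetilde E$, an index $i\ge1$ and a competitor $E\subset\R^d$; we may assume $|E^i\Delta E|<\infty$ and $P(E)<\infty$, otherwise \eqref{eq:Lambdamin} is trivial.

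Next I would reduce to perturbations of a single component that are small in measure. Set $\delta_0:=c_0(1+\lambda)^{-\frac{p}{1+\alpha p}}$ for a small $c_0=c_0(d,p,\alpha)\le1$ to be fixed. If $|E^i\Delta E|\ge\delta_0$, then \eqref{eq:scalingP} gives $P(E^i)\le\sum_jP(E^j)\le C(d,p,\alpha)(1+\lambda)^{\frac1{1+\alpha p}}$, whereas $\Lambda|E^i\Delta E|\ge C(1+\lambda)^{\frac{1+p}{1+\alpha p}}\cdot c_0(1+\lambda)^{-\frac{p}{1+\alpha p}}=Cc_0(1+\lambda)^{\frac1{1+\alpha p}}$, so \eqref{eq:Lambdamin} holds trivially provided $C\ge C(d,p,\alpha)/c_0$ (recall $P(E)\ge0$). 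Hence we may assume $|E^i\Delta E|<\delta_0\le1$, which together with $|E^i|\le\omega_d$ forces $|E|\le\omega_d+1$; from now on $|E|$ and $|E^i|$ are bounded by a dimensional constant.

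The heart of the argument is to compare $\widetilde E$ with the generalized set $\widetilde E'$ obtained by replacing the $i$-th component of $\widetilde E$ by $E$. Writing $S:=\sum_j\cWp^p(E^j)$ and $S':=\sum_j\cWp^p((E')^j)=S-\cWp^p(E^i)+\cWp^p(E)$, and using that $\sum_j|E^j|=\omega_d$ while $\big|\,\sum_j|(E')^j|-\omega_d\,\big|=\big|\,|E|-|E^i|\,\big|\le|E^i\Delta E|$, the minimality of $\widetilde E$ for $\widetilde\E_{\lambda,\Lambda/2}$ gives
\[
 P(E^i)\le P(E)+\frac\Lambda2|E^i\Delta E|+\lambda\big((S')^\alpha-S^\alpha\big).
\]
If $S'\le S$ the last term is $\le0$ and we are done; otherwise, \eqref{eq:LipW} and the volume bounds give $0<S'-S=\cWp^p(E)-\cWp^p(E^i)\le C(d,p)|E^i\Delta E|\le C(d,p)\delta_0$, so choosing $c_0$ small and using the \emph{lower} bound $S\ge\frac1C(1+\lambda)^{-\frac{p}{1+\alpha p}}$ of \eqref{eq:scalingW} we may ensure $S\le S'\le2S$. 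Then \eqref{eq:LipW2} (applied to the two families, which differ only in the $i$-th slot), followed by \eqref{eq:LipW}, $S\le S'\le2S$ and the volume bounds, yields
\[
 \lambda\big|(S')^\alpha-S^\alpha\big|\le C\lambda\max\big(S^{\alpha-1},(S')^{\alpha-1}\big)\,\big|\cWp^p(E)-\cWp^p(E^i)\big|\le C(d,p,\alpha)\,\lambda\,S^{\alpha-1}\,|E^i\Delta E|.
\]

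To conclude I would bound $\lambda S^{\alpha-1}$ using \eqref{eq:scalingW}: when $\alpha\ge1$ the upper bound $S\le C(1+\lambda)^{-\frac p{1+\alpha p}}$ gives $\lambda S^{\alpha-1}\le C(1+\lambda)^{1-\frac{p(\alpha-1)}{1+\alpha p}}$, and when $0<\alpha<1$ the lower bound $S\ge\frac1C(1+\lambda)^{-\frac p{1+\alpha p}}$ gives $\lambda S^{\alpha-1}\le C(1+\lambda)^{1+\frac{p(1-\alpha)}{1+\alpha p}}$; in both cases the exponent equals $\frac{1+p}{1+\alpha p}$. Plugging this in,
\[
 P(E^i)\le P(E)+\frac\Lambda2|E^i\Delta E|+C(d,p,\alpha)(1+\lambda)^{\frac{1+p}{1+\alpha p}}|E^i\Delta E|,
\]
and choosing the constant $C$ in the statement larger than $2C(d,p,\alpha)$, than $C(d,p,\alpha)/c_0$, and than $2C_0$, the last two terms are $\le\Lambda|E^i\Delta E|$, which is \eqref{eq:Lambdamin}. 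The step I expect to be the most delicate is precisely this last balancing: making the exponent $\frac{1+p}{1+\alpha p}$ come out for \emph{every} $\alpha>0$ is exactly what forces the use of the sharp two-sided scaling \eqref{eq:scalingW} — its upper bound when $\alpha\ge1$ and its lower bound when $\alpha<1$ — rather than a mere a priori upper bound on the transport term; the reduction to small perturbations (needed to bound $|E|$ and to get $S\asymp S'$) is the other point requiring care.
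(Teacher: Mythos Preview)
Your proof is correct and follows essentially the same route as the paper's: relax the volume constraint via Proposition~\ref{prop:relax}, test minimality against the generalized set obtained by replacing $E^i$ by $E$, reduce to the case $|E^i\Delta E|\lesssim(1+\lambda)^{-\frac{p}{1+\alpha p}}$ (so that the volumes are bounded and $S'\asymp S$), and then combine \eqref{eq:LipW2}, \eqref{eq:LipW} and the two-sided scaling \eqref{eq:scalingW} to control $\lambda\,S^{\alpha-1}$. The only cosmetic differences are that the paper penalizes with the fixed $\Lambda_0=C(1+\lambda)^{\frac{1}{1+\alpha p}}$ rather than with $\Lambda/2$, and does not split explicitly into the cases $\alpha\ge1$ and $\alpha<1$ (it uses \eqref{eq:assumeW}, \eqref{assumpWE} and \eqref{eq:scalingW} together to get the two-sided bound on the transport sums directly).
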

\begin{proof}
 Let $\Lambda_0= C\lt(1+\lambda\rt)^{\frac{1}{1+\alpha p}}$ be such that Proposition \ref{prop:relax} applies and let  $\widetilde{E}=(E^i)_{i\ge 1}$ be a generalized minimizer of $\widetilde{\E}_\lambda$. Without loss of generality,
 let us prove \eqref{eq:Lambdamin} for $E^1$.
 Using as competitor $E\times(E^i)_{i\ge 2}$ for $\widetilde{\E}_{\lambda,\Lambda_0}$ we find after simplification that 
 \begin{equation}\label{eq:testmin}
  P(E^1)\le P(E)+ \lambda \lt(\lt[ \cWp^p(E)+\sum_{i\ge 2} \cWp^p(E^i)\rt]^\alpha- \lt[\sum_i \cWp^p(E^i)\rt]^\alpha \rt) +\Lambda_0|E^1\Delta E|.
 \end{equation}
 Notice  that we can now  assume that 
 \begin{equation}\label{eq:assumeW} 
 \cWp^p(E)\ge \cWp^p(E^1)
 \end{equation}
 since otherwise we can already conclude that \eqref{eq:Lambdamin} holds.
 Moreover, \eqref{eq:scalingP} implies in particular that  $P(E^1)\le C(1+\lambda)^{\frac{1}{1+\alpha p}}$ so that we can assume that 
 \[
  |E^1\Delta E|\le C \Lambda^{-1} (1+\lambda)^{\frac{1}{1+\alpha p}}\le C (1+\lambda)^{-\frac{p}{1+\alpha p}},
 \]
 which in particular yields $|E^1|\le C$. From \eqref{eq:LipW}, this implies that we can work under the assumption
 \begin{equation}\label{assumpWE}
  \cWp^p(E)\le \cWp^p(E^1) + C  (1+\lambda)^{-\frac{p}{1+\alpha p}}\stackrel{\eqref{eq:scalingW}}{\le} C (1+\lambda)^{-\frac{p}{1+\alpha p}}.
 \end{equation}
Combining \eqref{eq:testmin}, \eqref{eq:LipW2} and \eqref{eq:LipW} we find

 \begin{align*}
  P(E^1)&\le P(E)+ \lambda\max\lt(\lt(\sum_i \cWp^p(E^i)\rt)^{\alpha-1}, \lt( \cWp^p(E)+\sum_{i\ge 2} \cWp^p(E^i)\rt)^{\alpha-1}\rt) |E^1\Delta E|\\
  &\qquad \qquad +\Lambda_0|E^1\Delta E|\\
  &\stackrel{\eqref{eq:assumeW}\&\eqref{assumpWE}\&\eqref{eq:scalingW}}{\le} P(E)+ C\lt(\lambda (1+\lambda)^{-\frac{p(\alpha-1)}{1+\alpha p}} +(1+\lambda)^{\frac{1}{1+\alpha p}}\rt) |E^1\Delta E|\\
  &\le P(E) +C (1+\lambda)^{\frac{1+p}{1+\alpha p}} |E^1\Delta E|.
 \end{align*}
 This  proves \eqref{eq:Lambdamin}. 
\end{proof}
As a direct corollary we obtain uniform density estimates for generalized minimizers (see \cite[Theorem 21.11]{Maggi}).
\begin{proposition}\label{prop:dens}
 There exists $C=C(d,p,\alpha)>0$ such that if   $ r<C\lt(1+\lambda\rt)^{-\frac{1+p}{1+\alpha p}}$, every generalized minimizer $\widetilde{E}=(E^i)_{i\ge1}$ of $\widetilde{\E}_\lambda$ satisfies for every $i$ and every $x\in E^i$
 \begin{equation}\label{eq:dens}
  |E^i\cap B (x,r)|\ge \frac{\omega_d}{4^d} r^d.
 \end{equation}
As a consequence, up to relabeling,  we have  $\widetilde{E}=(E^i)_{i=1}^I$ where for every $i$, $E^i$ are compact connected sets such that $\mathcal{H}^{d-1}(\partial E^i)=P(E^i)$. 
Moreover, there is a constant $C=C(d,p,\alpha)>0$ such that
\begin{equation}\label{eq:boundI}
   \sum_{i=1}^I \diam(E^i)\le C (1+\lambda)^{\frac{(d-1)(1+p)}{1+\alpha p}} \quad \textrm{ and } \quad \inf_i \diam(E^i)\ge C  (1+\lambda)^{-\frac{1+p}{1+\alpha p}}.
 \end{equation}
As a consequence $I\le C (1+\lambda)^{\frac{d(1+p)}{1+\alpha p}}$.
\end{proposition}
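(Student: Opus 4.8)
The plan is to deduce everything from the $\Lambda$-minimality property established in Proposition \ref{prop:Lambdamin} together with the scaling bounds \eqref{eq:scalingP}, via the standard regularity theory for $\Lambda$-minimizers of the perimeter (as in \cite[Theorem 21.11]{Maggi}). First I would fix $\Lambda = C(1+\lambda)^{\frac{1+p}{1+\alpha p}}$ so that each connected component $E^i$ of a generalized minimizer $\widetilde E=(E^i)_{i\ge 1}$ satisfies \eqref{eq:Lambdamin}. Because the density estimate \eqref{eq:dens} is only meaningful at scales $r \lesssim \Lambda^{-1}$ (this is the scale at which the perimeter term dominates the volume penalization $\Lambda r^d$ in the comparison), the natural radius is $r_0 \simeq (1+\lambda)^{-\frac{1+p}{1+\alpha p}}$. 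The lower density bound \eqref{eq:dens} is a direct citation: plugging into the $\Lambda$-minimality inequality the competitor $E^i \setminus B(x,r)$ and using the relative isoperimetric inequality in the ball yields, for $x$ in the (measure-theoretic) interior of $E^i$ and $r \le r_0$, the bound $|E^i \cap B(x,r)| \ge \omega_d 4^{-d} r^d$, and symmetrically an upper density bound $|E^i \cap B(x,r)| \le (1-\omega_d 4^{-d}) \omega_d r^d$ for $x \notin E^i$.

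Next I would extract the topological/structural consequences. The lower density bound at scale $r_0$ forces each nonempty component to carry volume at least $\omega_d 4^{-d} r_0^d$; since $\sum_i |E^i| = \omega_d$ this immediately bounds the number of nonempty components by $I \le 4^d r_0^{-d} \le C(1+\lambda)^{\frac{d(1+p)}{1+\alpha p}}$, which is the last assertion. Relabeling, we may write $\widetilde E = (E^i)_{i=1}^I$. The density estimates also give, in the usual way, that each $E^i$ (after choosing the representative consisting of points of density one) is a bounded open set whose topological boundary coincides with its reduced boundary up to an $\mathcal{H}^{d-1}$-null set, so that $\mathcal{H}^{d-1}(\partial E^i) = P(E^i)$, and that $E^i$ is indecomposable, hence connected after the relabeling — here I would just invoke \cite[Theorem 21.11]{Maggi} and the fact that a set of finite perimeter with a uniform lower density bound has finitely many indecomposable components, each of which can be taken as a separate $E^i$ without increasing the energy (this is compatible with the definition of generalized minimizer since splitting into connected components does not increase $\sum_i P$ by additivity of perimeter and does not change $\sum_i \cWp^p$ for already-disjoint pieces).

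For the diameter bounds, the lower bound $\inf_i \diam(E^i) \ge C(1+\lambda)^{-\frac{1+p}{1+\alpha p}}$ follows from the lower density estimate: a connected set of finite perimeter satisfying $|E^i \cap B(x,r)| \ge \omega_d 4^{-d} r^d$ for all $x \in E^i$ and all $r \le r_0$ cannot be contained in a ball of radius much smaller than $r_0$ without violating the volume it must contain, or more simply, if $\diam(E^i) = \delta < r_0$ then taking $r = \delta$ in \eqref{eq:dens} at a point of $E^i$ together with $|E^i| \le \omega_d \delta^d$ and the lower bound forces $\delta \ge c r_0$. For the upper bound on $\sum_i \diam(E^i)$, I would use that for a connected set of finite perimeter one has the elementary estimate $\diam(E^i) \le C \mathcal{H}^{d-1}(\partial E^i)^{\frac{1}{d-1}} = C P(E^i)^{\frac{1}{d-1}}$ (a one-dimensional slicing argument: almost every hyperplane through the set meets $\partial E^i$, so the projection of $E^i$ onto any line has length controlled by the perimeter to the power $1/(d-1)$ — more carefully, for connected sets $\diam \le P^{1/(d-1)}$ up to dimensional constant). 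Summing and using concavity of $t \mapsto t^{\frac{1}{d-1}}$ together with $I \le C(1+\lambda)^{\frac{d(1+p)}{1+\alpha p}}$ and $\sum_i P(E^i) \le C(1+\lambda)^{\frac{1}{1+\alpha p}}$ from \eqref{eq:scalingP}, we get $\sum_i \diam(E^i) \le I \cdot \big(\frac{1}{I}\sum_i P(E^i)\big)^{\frac{1}{d-1}} \cdot C \le C I^{1-\frac{1}{d-1}} (1+\lambda)^{\frac{1}{(d-1)(1+\alpha p)}}$; plugging in the bound on $I$ and simplifying the exponents yields exactly $(d-1)\cdot\frac{1+p}{1+\alpha p}$ after a short computation.

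The main obstacle I anticipate is not any single step — each is standard — but getting the \emph{exponents} right and making the density estimate at the correct scale $r_0 = c(1+\lambda)^{-\frac{1+p}{1+\alpha p}}$: one must check that at this scale the $\Lambda$-term $\Lambda r^d$ in the comparison inequality is indeed lower order compared to the perimeter gain $\sim r^{d-1}$, i.e. $\Lambda r \ll 1$, which holds precisely because $\Lambda r_0 \simeq 1$. The bookkeeping of combining $I \lesssim (1+\lambda)^{d(1+p)/(1+\alpha p)}$ with $\sum_i P(E^i) \lesssim (1+\lambda)^{1/(1+\alpha p)}$ through the concave diameter-vs-perimeter inequality to land on the stated power $(d-1)(1+p)/(1+\alpha p)$ is the one place where an arithmetic slip is easy; everything else is a direct appeal to \cite[Theorem 21.11]{Maggi} and elementary geometry of sets of finite perimeter.
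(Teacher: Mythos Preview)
Your density estimate, the bound on $I$ via the volume lower bound $|E^i|\ge \omega_d 4^{-d}r_0^d$, the structural consequences, and the lower diameter bound are all correct and essentially match the paper (the paper actually derives the bound on $I$ from the two diameter estimates rather than directly from the volume, but your route is equally valid).

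The genuine gap is in your upper bound for $\sum_i \diam(E^i)$. The inequality $\diam(E)\le C\,P(E)^{1/(d-1)}$ for connected sets is \emph{false} in dimension $d\ge 3$: a cylinder of radius $\eps$ and length $L$ in $\R^3$ has $P\sim \eps L$ and $\diam\sim L$, so $\diam/P^{1/2}\sim (L/\eps)^{1/2}\to\infty$. The slicing heuristic you sketch does not give this bound; slicing controls the measure of the projection, not the diameter, and there is no way to close the argument without some thickness information. (For $d=2$ the inequality $\diam\le C\,P$ does hold, which may be what you had in mind.) As a secondary point, even granting the inequality, your Jensen step produces the exponent $\frac{d(d-2)(1+p)+1}{(d-1)(1+\alpha p)}$, which equals $\frac{(d-1)(1+p)}{1+\alpha p}$ only when $p=0$; so the ``short computation'' does not land where you claim.

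The paper instead exploits the density estimate itself to get the diameter bound, which is the missing idea: fix $r$ with $\Lambda r=1/2$, take a Vitali family $\{B(x_j,r)\}_{j=1}^{N_i}$ covering $E^i$ with the $B(x_j,r/5)$ disjoint, and use \eqref{eq:dens} on the disjoint balls to obtain $N_i\le C r^{-d}|E^i|$. Connectedness of $E^i$ then gives $\diam(E^i)\le C r N_i\le C r^{1-d}|E^i|$, and summing over $i$ (using $\sum_i|E^i|=\omega_d$) yields
\[
\sum_i \diam(E^i)\le C r^{-(d-1)}\le C(1+\lambda)^{\frac{(d-1)(1+p)}{1+\alpha p}}.
\]
In short, the density bound is not only the input for regularity but also the substitute for the (unavailable) diameter--perimeter inequality.
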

\begin{remark}\label{rem:reg}
 Let us notice that the regularity theory for $\Lambda-$minimizers of the perimeter gives us actually much more. 
 Denote   $\partial^*E$  the reduced boundary of $E$ (see \cite{Maggi}) and $\Sigma(E)=\partial E\backslash \partial^* E$. Then if $E$ is a $\Lambda-$minimizer of the perimeter, 
  $\partial^* E$ is  $C^{1,\gamma}$ for every $\gamma<1/2$ and $\Sigma(E)$ is empty if $d\le 7$, an at most finite union of points if $d=8$ and satisfies $\mathcal{H}^{s}(\Sigma(E))=0$ for every $s>d-8$ if $d\ge 9$.
  For classical or generalized minimizers of our energy we expect higher regularity to hold but this goes beyond the scope of this paper.
\end{remark}
\begin{proof}[Proof of Proposition \ref{prop:dens}]
 By Proposition \ref{prop:Lambdamin}, there exists $\Lambda=C\lt(1+\lambda\rt)^{\frac{1+p}{1+\alpha p}}$ such that every generalized minimizer $\widetilde{E}=(E^i)_{i\ge 1}$ is a $\Lambda-$minimizer of the perimeter. 
 By \cite[Theorem 21.11]{Maggi}, \eqref{eq:dens} holds as long as  $\Lambda r<1$. This proves the first part of the claim. We can further make the identification
 \[
  E^i=\{x\in \R^d \ : \ \liminf_{r\to 0} |E^i\cap B(x,r)|>0\}
 \]
 so that thanks to \eqref{eq:dens}, $E^i$ are closed sets with $\mathcal{H}^{d-1}(\partial E^i)=P(E^i)$. By \eqref{repulsive} we may further assume that each $E^i$ is connected. 
Fix now $r$ such that $\Lambda r=1/2$. By Vitali's covering Lemma, for every $i$ let $x_1,\cdots, x_{N_i}\in E^i$ be such that $E^i\subset \cup_{j=1}^{N_i} B(x_j,r)$ 
and $B(x_j,r/5)$ are pairwise disjoint. Using \eqref{eq:dens} we have $N_i\le C r^{-d} |E^i|$. Since $\diam(E^i)\le C  r N_i$ we have 
\[
 \sum_{i} \diam(E^i)\le C r^{-(d-1)}\le C (1+\lambda)^{\frac{(d-1)(1+p)}{1+\alpha p}}. 
\]
This proves the first part of \eqref{eq:boundI}. The second part follows from $\diam(E^i)\ge C |E^i|^{1/d}\ge C r$ which is a direct consequence of \eqref{eq:dens}.
\end{proof}

\subsection{Proof of Theorem \ref{exis_mini}}
We may now conclude the proof of Theorem \ref{exis_mini} and show the existence of (classical) minimizers for \eqref{def:minprob}.
\begin{proof}[Proof of Theorem \ref{exis_mini}]
  For every fixed $d\ge 2$, $p\ge 1$, $\alpha>0$ and $\lambda>0$, Proposition \ref{prop:existgenmin} gives the existence of a generalized minimizer $\widetilde{E}=(E^i)_{i\ge 1}$. 
  We thus have by \eqref{eq:equalinf},
  \begin{equation*}\label{eq:attained}
   \sum_i P(E^i)+\lambda\lt[\sum_i \cWp^p(E^i)\rt]^\alpha=\inf_{|E|=\omega_d} \E_\lambda(E).
  \end{equation*}
Thanks to Proposition \ref{prop:dens}, if $R=C (1+\lambda)^{\frac{(d-1)(1+p)}{1+\alpha p}}$ with $C>0$ large enough, then  $\widetilde{E}=(E^i)_{i=1}^I$ with $I\le R^{\frac{d}{d-1}}$ and for every $i\le I$, $E_i$ 
is a connected compact set with $\sum_{i=1}^I\diam(E^i)\le C R$. Let $(e_1,\cdots,e_d)$ be the canonical basis of $\R^d$ and define the set  
\[
 E=\cup_{i=1}^I (E^i+ R i e_1).
\]
By Proposition \ref{prop:super}, if $C$ is large enough, $\cWp^p(E)=\sum_i \cWp^p(E^i)$.
Since $E^i$ are pairwise disjoint we also have $P(E)=\sum_i P(E^i)$ (and $|E|=\sum_i |E^i|=\omega_d$) so that 
\[
 \E_\lambda(E)=  \sum_i P(E^i)+\lambda\lt[\sum_i \cWp^p(E^i)\rt]^\alpha\stackrel{\eqref{eq:equalinf}}{=}\inf_{|E|=\omega_d} \E_\lambda(E).
\]
Therefore $E$ is a minimizer of \eqref{def:minprob} and the proof is complete.
\end{proof}
\section{Minimality of the ball for small $\lambda$}
We now turn to Theorem \ref{ballmin} and prove that for small $\lambda$ the unique minimizers of \eqref{def:minprob} are balls. We first show that for $\lambda$ small enough, up to a translation, every minimizer of \eqref{def:minprob} is a small $C^{1,\gamma}$ perturbation of the ball $B_1$.
\begin{proposition}\label{prop:nearlyspherical}
 For every $d\ge 2$, $p\ge 1$, $\alpha>0$, $\gamma\in(0,1/2)$ and $\eps>0$, there exists $\lambda_0=\lambda_0(d,p,\alpha,\gamma, \eps)$ such that for every $\lambda\le \lambda_0$, 
 up to translation, every minimizer $E$ of \eqref{def:minprob} is nearly spherical in the sense that its barycenter is in $0$ and 
 there exists $f:\partial B_1\mapsto \R$ with $\|f\|_{C^{1,\gamma}}\le \eps$ such that 
 \[
  \partial E=\{(1+f(x)) x \ : \ x\in \partial B_1\}.
 \]

\end{proposition}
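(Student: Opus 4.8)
The plan is to argue by contradiction, combining the energy scaling \eqref{eq:scalingener}, the $\Lambda$-minimality of minimizers (Proposition \ref{prop:Lambdamin}) and the regularity theory for $\Lambda$-minimizers of the perimeter. Since the energy and the notion of nearly spherical set are translation covariant, a minimizer $E$ is nearly spherical up to translation precisely when $E-b(E)$, where $b(E)$ denotes the barycenter of $E$, has boundary of the form $\{(1+f)x:x\in\partial B_1\}$ with $\|f\|_{C^{1,\gamma}}\le\eps$. Suppose the statement fails for some $\gamma\in(0,1/2)$ and $\eps>0$; then there exist $\lambda_n\to 0$ and minimizers $E_n$ of $\E_{\lambda_n}$ with $|E_n|=\omega_d$, which we may take with $b(E_n)=0$, whose boundaries are not of this form. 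Testing against the unit ball and using \eqref{nrj_bdd} we get
\[
 \E_{\lambda_n}(E_n)\le\E_{\lambda_n}(B_1)=P(B_1)+\lambda_n\,\cWpa{B_1}\longrightarrow P(B_1),
\]
and since $P(E_n)\le\E_{\lambda_n}(E_n)$ while the isoperimetric inequality gives $P(E_n)\ge P(B_1)$ (as $|E_n|=\omega_d$), we deduce $P(E_n)\to P(B_1)$. For $n$ large, $\lambda_n\le1$, so, viewing $E_n$ as a generalized minimizer, Proposition \ref{prop:Lambdamin} makes $E_n$ a $\Lambda_n$-minimizer of the perimeter with $\Lambda_n\le C(1+\lambda_n)^{\frac{1+p}{1+\alpha p}}\le C$; moreover, by Theorem \ref{exis_mini} and the density estimate \eqref{eq:dens}, $E_n$ has $I_n\le C$ connected components, each of diameter $\le C$ and of volume at least a fixed $v_0>0$.

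Next I would show that $E_n$ is connected for $n$ large. Each component $E_n^i$ satisfies $P(E_n^i)\ge d\,\omega_d^{1/d}\,|E_n^i|^{\frac{d-1}{d}}$, and by strict subadditivity of $t\mapsto t^{\frac{d-1}{d}}$ the infimum of $\sum_i|E_n^i|^{\frac{d-1}{d}}$ over the (compact) set of admissible volume vectors having at least two entries $\ge v_0$ and summing to $\omega_d$ exceeds $\omega_d^{\frac{d-1}{d}}$ by a constant $2\delta_0>0$ depending only on $d$ and $v_0$. Hence if $I_n\ge2$ then $P(E_n)=\sum_iP(E_n^i)\ge P(B_1)+2d\,\omega_d^{1/d}\delta_0$, contradicting $P(E_n)\to P(B_1)$. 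So, for $n$ large, $E_n$ is a single connected compact set of diameter $\le C$ with barycenter $0$, hence contained in a fixed ball $B_R$.

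I would then pass to the limit. By compactness for sets of finite perimeter we may assume, up to a subsequence, that $E_n\to E_\infty$ in $L^1$ with $|E_\infty|=\omega_d$; lower semicontinuity of the perimeter and $P(E_n)\to P(B_1)$ give $P(E_\infty)\le P(B_1)$, while the isoperimetric inequality gives $P(E_\infty)\ge P(B_1)$, so $E_\infty$ is a ball of volume $\omega_d$, and since its barycenter is the limit of $b(E_n)=0$ we conclude $E_\infty=B_1$. Thus $E_n$ is a sequence of $\Lambda_n$-minimizers of the perimeter with $\Lambda_n\le C$ converging in $L^1$ to $B_1$, whose boundary is smooth with empty singular set. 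By the uniform density estimates together with the $\eps$-regularity and improved convergence theorems for $\Lambda$-minimizers of the perimeter (see \cite[Theorem 26.6]{Maggi}; compare Remark \ref{rem:reg}), for $n$ large $\partial E_n=\{(1+f_n)x:x\in\partial B_1\}$ with $\|f_n\|_{C^{1,\gamma}}\to0$. In particular $\|f_n\|_{C^{1,\gamma}}\le\eps$ for $n$ large, and since $b(E_n)=0$ this contradicts the choice of $E_n$, which proves the proposition.

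The step I expect to be the main obstacle is this last passage from $L^1$ to $C^{1,\gamma}$ convergence of the boundaries, which rests on the full regularity theory for $\Lambda$-minimizers of the perimeter (Hausdorff convergence of the boundaries from the density estimates, smallness of the excess in small balls from $L^1$ closeness to the sphere, and then $\eps$-regularity and interpolation). Ruling out disconnected minimizers for small $\lambda$ is, by comparison, an elementary consequence of the energy asymptotics and of strict subadditivity of the isoperimetric profile.
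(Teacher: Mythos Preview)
Your proof is correct and follows essentially the same strategy as the paper: establish $L^1$ convergence of minimizers to $B_1$, uniform $\Lambda$-minimality from Proposition~\ref{prop:Lambdamin}, and then invoke the improved convergence theorem for $\Lambda$-minimizers of the perimeter. The only difference is that the paper obtains the $L^1$ convergence directly from the quantitative isoperimetric inequality (which makes your separate connectedness and compactness arguments unnecessary), but this is a minor streamlining rather than a different route.
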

\begin{proof}
 The proof is quite classical and mostly rests on the (uniform in $\lambda$) $\Lambda-$minimizing property of $E$. Let $E_\lambda$ be a sequence of minimizers of \eqref{def:minprob}. For fixed $\gamma\in (0,1/2)$ we aim at proving 
 that up to translation $E_\lambda$ converges in $C^{1,\gamma}$ to $B_1$. We start by noting that using $B_1$ as a competitor together
 with the quantitative isoperimetric inequality we have up to translation,
 \begin{equation}\label{eq:estimDeltaW}
  |E_\lambda\Delta B_1|^2\le C\lt(P(E)-P(B_1)\rt)\le C\lambda \lt(\cWpa{B_1}-\cWpa{E_\lambda}\rt)\le C\lambda \cWpa{B_1}.
 \end{equation}
Therefore $E_\lambda$ converges in $L^1$ to $B_1$. It is now a classical fact that if a sequence of $\Lambda-$minimizers converges in $L^1$ to a smooth set then the whole sequence is actually smooth
(with the notation of Remark \ref{rem:reg}, $\Sigma(E_\lambda)=\emptyset$) and the convergence holds in $C^{1,\gamma}$ (see e.g. \cite[Lemma 3.6]{CicLeo}). As a consequence also the barycenter of $E_\lambda$ converges to $0$ and the proof is concluded. \
\end{proof}
We now recall that for nearly spherical sets, it was shown in \cite{fuglede} that there exists $C=C(d)>0$ such that 
\begin{equation}\label{eq:fugledeper}
 \int_{\partial B_1} f^2\le C \lt(P(E)-P(B_1)\rt).
\end{equation}
We postpone the proof of the following counterpart for $\cWp$ to the next section.
\begin{proposition}\label{prop:fugledeW}
 There exists $C=C(d,p,\alpha)>0$ such that for every  nearly spherical set $E$,
 \begin{equation}\label{eq:fugledeW}
  \cWpa{B_1}-\cWpa{E}\le C\int_{\partial B_1} f^2.
 \end{equation}
\end{proposition}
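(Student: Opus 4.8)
The strategy is to exploit the dual (Kantorovich) formulation of $\cWp^p$ together with the explicit Kantorovich potentials for the ball $B_1$. The point is that $\cWp^p(B_1)$ is realized by some optimal pair, and a \emph{fixed} admissible pair of potentials (those associated to $B_1$) used against the competitor $E$ gives a lower bound for $\cWp^p(E)$; subtracting yields an upper bound on the difference $\cWp^p(B_1)-\cWp^p(E)$ in terms of how much $\partial E$ deviates from $\partial B_1$, i.e.\ in terms of $f$. First I would record the explicit structure of the $\cWp$-minimizer $F_0$ of $B_1$: by symmetry $F_0$ is the spherical shell $B_{R_0}\setminus B_1$ with $R_0$ chosen so that $|F_0|=|B_1|=\omega_d$, and the optimal transport map is radial, $x\mapsto T_0(x)=g(|x|)\,x/|x|$ for an explicit increasing $g$ solving the Monge–Ampère ODE. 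Let $(\varphi_0,\psi_0)$ be the corresponding Kantorovich potentials, so that $\varphi_0(x)+\psi_0(y)\le |x-y|^p$ everywhere with equality on the graph of $T_0$, and $\cWp^p(B_1)=\int_{B_1}\varphi_0+\int_{F_0}\psi_0$.

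Next I would set up the comparison. For the nearly spherical set $E$ with $\partial E=\{(1+f(x))x\}$, the relaxed formulation \eqref{eq:equalrelaxW} together with Kantorovich duality gives, for any $1$-Lipschitz-type admissible pair,
\[
\cWp^p(E)\ge \int_E \varphi_0 + \inf_{\mu\le \chi_{E^c},\ \mu(\R^d)=|E|}\int \psi_0\, d\mu.
\]
The infimum over admissible $\mu$ is attained by putting mass on the region of $E^c$ where $\psi_0$ is smallest; since $\psi_0$ is radial and (being a Kantorovich potential for transport outward from $B_1$) roughly increasing in $|y|$, the optimal $\mu$ for the \emph{ball} is $\chi_{F_0}$, and for $E$ it is a small perturbation thereof. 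Writing $\cWp^p(B_1)-\cWp^p(E)\le \int_{B_1}\varphi_0-\int_E\varphi_0 + \int_{F_0}\psi_0 - \int\psi_0\,d\mu_E$ and carefully pairing the volume defect regions (the symmetric difference $B_1\triangle E$ on the inner side, and the corresponding shell defect $F_0\triangle F_E$ on the outer side), one sees that the first-order term vanishes because $|E|=|B_1|$ forces the inner and outer volume transfers to cancel against the constant value of $\varphi_0+\psi_0$ on the relevant boundary; what remains is controlled by $\|\nabla\varphi_0\|_\infty$ and $\|\nabla\psi_0\|_\infty$ times a quantity of size $\int_{\partial B_1} f^2$, using that $|B_1\triangle E|\le C\int_{\partial B_1}|f|$ and a sharper second-order accounting near $\partial B_1$. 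Finally, the passage from $\cWp^p$ to $\cWpa{\cdot}=[\cWp^p]^\alpha$ is immediate from the elementary inequality $a^\alpha-b^\alpha\le C\max(a,b)^{\alpha-1}(a-b)$ used already in Lemma \ref{wassloc}, together with $\cWp^p(B_1)\sim 1$.

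The main obstacle I anticipate is making the cancellation of the first-order term rigorous: one must show that the linear-in-$f$ contribution genuinely vanishes rather than merely being $O(\|f\|)$, which requires knowing that $\varphi_0+\psi_0\equiv$ const on $\partial B_1$ (the transport interface) and handling the fact that the competitor's optimal target measure $\mu_E$ is not explicitly known. The trick to avoid computing $\mu_E$ is to \emph{not} optimize: pick a \emph{specific} convenient admissible $\mu_E$ (a radial rearrangement of $\chi_{F_0}$ adjusted to carry the perturbed volume profile of $E$), which still lower-bounds $\cWp^p(E)$ via duality and is explicit enough to estimate. The remaining work is then a Taylor expansion of the radial integrals in $f$, where the quadratic remainder is bounded using the smoothness and strict monotonicity of $g$ (equivalently, uniform bounds on the potentials $\varphi_0,\psi_0$ and their gradients on the annulus $B_{R_0}$), together with $\|f\|_{C^{1,\gamma}}\le\eps$ from Proposition \ref{prop:nearlyspherical} to absorb higher-order terms.
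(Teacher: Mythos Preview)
Your strategy is exactly that of the paper: bound $\cWp^p(E)$ from below via duality using the Kantorovich potentials $(\phi,\psi)$ of $B_1$, subtract, and reduce from $\alpha$ to $\alpha=1$ by \eqref{eq:LipW2}. Both obstacles you flag dissolve more cleanly than you anticipate. First, you need not construct an ad hoc $\mu_E$: the paper shows (Lemma~\ref{lem:Wasball}) that $\psi$ is radially \emph{increasing on all of $\R^d$}, so for any $E\subset B_{2^{1/d}}$ the infimum $\inf_F\int_F\psi$ over admissible $F$ is attained exactly at $F=B_{2^{1/d}}\setminus E$ (Lemma~\ref{lem:technical}); this gives directly
\[
\cWp^p(B_1)-\cWp^p(E)\le \int_{B_1\setminus E}(\phi-\psi)-\int_{E\setminus B_1}(\phi-\psi).
\]
Second, the first-order cancellation does not rely on $\phi+\psi\equiv$ const on $\partial B_1$ (duality gives $\phi(x)+\psi(T(x))=|x-T(x)|^p$, which is a different statement); it comes simply from radial symmetry: $\phi-\psi$ is radial, hence equal to a constant $c$ on $\partial B_1$, and since $|B_1\setminus E|=|E\setminus B_1|$ one subtracts $c$ for free. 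Local Lipschitz continuity of $\phi,\psi$ near $\partial B_1$ then bounds the remainder by $C\int_{B_1\Delta E}\bigl|1-|x|\bigr|\le C\int_{\partial B_1}f^2$.
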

Taking this estimate for granted we may easily conclude the proof of Theorem \ref{ballmin}.
\begin{proof}[Proof of Theorem \ref{ballmin}]
 By Proposition \ref{prop:nearlyspherical}, if $\lambda$ is small enough then every minimizer $E$ of \eqref{def:minprob} is nearly spherical. Arguing as in \eqref{eq:estimDeltaW} we have 
 \[
  \int_{\partial B_1} f^2\stackrel{\eqref{eq:fugledeper}}{\le}C\lt(P(E)-P(B_1)\rt)\le C\lambda \lt(\cWpa{B_1}-\cWpa{E}\rt)\stackrel{\eqref{eq:fugledeW}}{\le} C\lambda \int_{\partial B_1} f^2,  
 \]
which implies that if $\lambda$ is small enough, $f=0$ and thus $E=B_1$.
\end{proof}
\subsection{Proof of Proposition \ref{prop:fugledeW}}
We start with a few simple facts about $\cWp(B_1)$. We let $A=B_{2^{1/d}}\backslash B_1$ be the annulus of volume $\omega_d$ around $B_1$.  With a slight abuse of notation, we will write $\phi(x)=\phi(|x|)$ if $\phi$ is a radial function.

\begin{lemma}\label{lem:Wasball}
 We have the following properties:
 \begin{itemize}
  \item[(i)] $A$ is the minimizer of \eqref{def:Wp} for $B_1$, i.e.  $\cWp(B_1)=W_p(B_1,A)$;
  \item[(ii)] The map 
  \begin{equation}\label{def:T}
   T(x)=\lt(1+|x|^d\rt)^{\frac{1}{d}} \frac{x}{|x|}
  \end{equation}
is an optimal transport map (the unique one if $p>1$) between $B_1$ and $A$. Moreover, the corresponding Kantorovich potentials $(\phi,\psi)$ are radially symmetric and $r\mapsto \psi(r)$ is increasing. Finally, $(\phi,\psi)$ are locally Lipschitz continuous.
 \end{itemize}

\end{lemma}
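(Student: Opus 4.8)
The plan is to reduce everything to the one-dimensional radial picture. First I would observe that by radial symmetry of $B_1$, the minimizer $F$ of \eqref{def:Wp} for $B_1$ must be radially symmetric (uniqueness in Proposition~\ref{prop:Wb} forces this: any rotation of a minimizer is again a minimizer, hence equals it). Since $|F\cap B_1|=0$ and $|F|=\omega_d$, and since by \eqref{eq:Linf} the transport distances are bounded, one expects $F$ to be an annulus; the natural candidate is exactly $A=B_{2^{1/d}}\setminus B_1$. To prove this is optimal, I would directly exhibit a candidate transport map and a pair of Kantorovich potentials and invoke the duality characterization of optimality: if $T$ pushes $\chi_{B_1}$ forward to $\chi_A$, if $\psi$ is $1$-Lipschitz in the relevant metric (i.e. $c$-concave for the cost $|x-y|^p$) with conjugate $\phi$, and if $\psi(y)-\phi(x)=|x-y|^p$ along the graph of $T$, then $T$ is optimal, $A$ is the $\cWp$-minimizer, and $(\phi,\psi)$ are the Kantorovich potentials.

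For the map itself, the guess $T(x)=(1+|x|^d)^{1/d}\,x/|x|$ in \eqref{def:T} is the unique radial, monotone-in-$|x|$ map sending $B_1$ to $A$: writing $r=|x|$, $s=|T(x)|$, the condition that $T$ transports Lebesgue measure restricted to $B_1$ onto Lebesgue measure restricted to $A$ forces $s^d-1 = r^d$, i.e. $s=(1+r^d)^{1/d}$, by equating the measures of balls $\{|x|\le r\}$ and $\{1\le|y|\le s\}$. So the volume (Monge--Amp\`ere / Jacobian) constraint pins down $T$ uniquely among radial monotone maps, and since the cost $|x-y|^p$ is radially symmetric and strictly convex as a function along rays, a standard symmetrization/rearrangement argument shows the optimal map must be of this radial monotone form (for $p>1$ uniqueness of the optimal plan from \cite[Theorem 2.44]{Viltop} then identifies it with $T$). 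The Kantorovich potentials: since the problem is radial, $\phi$ and $\psi$ are radial, and $\psi$ must be built so that its $c$-superdifferential at $y=T(x)$ contains $x$; differentiating the optimality relation $\psi(T(x))-\phi(x)=|T(x)-x|^p=(s-r)^p$ in the radial variable gives a first-order ODE for $\psi(s)$, from which one reads off that $\psi'(s)>0$ (the cost increment $(s-r)^p$ and the monotonicity $r\mapsto s$ conspire to make $\psi$ increasing), giving the claimed monotonicity. Local Lipschitz continuity of $(\phi,\psi)$ follows from local Lipschitz continuity of the cost $|x-y|^p$ on the bounded sets $B_1$, $A$ together with the $c$-concavity (inf/sup of uniformly locally Lipschitz functions), or directly from the explicit ODE.

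I expect the main obstacle to be the rigorous justification that the optimal transport map is radial and monotone — i.e. reducing the $d$-dimensional transport problem $W_p(B_1,A)$ to the radial one — rather than the computations, which are essentially forced. One clean way around it is to avoid characterizing all optimal maps and instead verify optimality of the \emph{specific} pair: one explicitly writes $\psi(r) = \int_{r_0}^{r} p\big(\sigma-(\sigma^d-1)^{1/d}\big)^{p-1}\,d\sigma$ type formula on $[1,2^{1/d}]$ (the radial ODE solution), extends it to a $c$-concave function on all of $\R^d$, checks $\psi(y)-\phi(x)\le |x-y|^p$ everywhere with equality on $\mathrm{graph}(T)$, and concludes by weak duality that $T$ is optimal and $A$ the minimizer; for $p>1$ the uniqueness in \cite[Theorem 2.44]{Viltop} upgrades this to "the unique one." This sidesteps the symmetrization argument entirely and only uses elementary ODE and convexity estimates. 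For $p=1$ one argues slightly differently, using that $\psi$ is $1$-Lipschitz and directly checking the Kantorovich duality equality $\int\psi\,d(\chi_{B_1}-\chi_A)=W_1(B_1,A)$ via the explicit $\psi$, which by Proposition~\ref{prop:Wb} and \eqref{eq:equalrelaxW} identifies $A$ as the $\cWp$-minimizer as well.
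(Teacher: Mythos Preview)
Your strategy for (i) matches the paper's: rotational invariance plus uniqueness of the $\cWp$-minimizer forces $F$ to be radially symmetric, and then $A$ is the obvious (and easily checked) optimum among radial sets. For the optimality of $T$ in (ii) your plan diverges. Where you propose to verify optimality by explicitly constructing the potentials via an ODE and then checking the duality inequality $\phi(x)+\psi(y)\le |x-y|^p$ on all of $\R^d\times\R^d$, the paper instead checks $c$-\emph{cyclical monotonicity} of $T$ directly: since $r\mapsto (1+r^d)^{1/d}$ is monotone on $\R^+$ it is $c$-cyclically monotone for the one-dimensional cost $|r-s|^p$, and a short chain of inequalities (using $|T(x_{i-1})-x_i|\ge \big||T(x_{i-1})|-|x_i|\big|$) lifts this to $\R^d$. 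This is cleaner than your route because it establishes optimality of $T$ before, and independently of, any computation or extension of the potentials.

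There is one genuine gap in your plan for the monotonicity of $\psi$. Your ODE argument, which amounts to $\psi'(s)=p\big(s-(s^d-1)^{1/d}\big)^{p-1}>0$, is only valid on the range of $T$, i.e.\ for $s\ge 1$. It says nothing about $\psi$ on $B_1$, where $\psi$ is defined only through the $c$-conjugate $\psi(y)=\inf_x[|x-y|^p-\phi(x)]$, and there is no a priori reason this extension is increasing. The paper devotes a separate argument to this point: for $p>1$ it proves, via a competitor comparison, that for $y\in B_1$ the infimum is attained at $x=0$, so that $\psi(y)=|y|^p-\phi(0)$ on $B_1$, which is manifestly increasing; for $p=1$ one takes $\phi(x)=-|x|$ explicitly. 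This global monotonicity of $\psi$ (not just on $A$) is exactly what is used in Lemma~\ref{lem:technical}, so it cannot be skipped.
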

\begin{proof}
We start with $(i)$. By Proposition \ref{prop:Wb} let $F$ be the unique minimizer of \eqref{def:Wp} for $B_1$ so that $\cWp(B_1)=W_p(B_1,F)$.  If $R$ is  any rotation of $\R^d$, since 
$W_p(R(B_1), R(F))= W_p(B_1, F)$, we see that $R(F)$ is also a minimizer of \eqref{def:Wp} for $B_1$. By uniqueness we have $F=R(F)$ and thus $F$ is radially symmetric. 
	 It is then immediate to check that among radially symmetric sets the minimizer is indeed $A$.
	
	\bigskip
	
	We now turn to  $(ii)$. We start by noting that $T$ defined in \eqref{def:T} is the unique radially symmetric map (in the sense that $T(x)=f(|x|) \frac{x}{|x|}$) which solves $\det \nabla T=1$ and $f(0)=1$. 
	Let us argue that $T$ is $c-$cyclically monotone for the cost $c(x,y)=|x-y|^p$ and thus an optimal transport map between any bounded radially symmetric set $E$ and $T(E)$ (see \cite[Definition 2.33 \& Remark 2.39]{Viltop}).
	This follows from the fact that $f(r)=(1+r^d)^{1/d}$ is monotone on $\R^+$ and thus also $c-$cyclically monotone on $\R^+$ (as these two notions coincide for convex costs in dimension one)
	so that for every $x_1,\cdots, x_I$, using the convention $x_0=x_I$
	\begin{multline*}
	 \sum_{i=1}^{I} |T(x_i)-x_i|^p=\sum_{i=1}^I |f(|x_i|)-|x_i||^p
	 \le \sum_{i=1}^I |f(|x_{i-1}|)-|x_{i}||^p\\
	 \le \sum_{i=1}^I |f(|x_{i-1}|) \frac{x_{i-1}}{|x_{i-1}|}-x_{i}|^p
	 =\sum_{i=1}^I |T(x_{i-1})-x_i|^p.
	\end{multline*}
Notice that the inverse map $T^{-1}: B_1^c\mapsto \R^d$ is given by
\[
 T^{-1}(y)=\bigg(|y|^d -1 \bigg)^{\frac{1}{d}}\frac{y}{|y|}.
\]

	We now argue a bit differently for  $p>1$ and $p=1$ regarding the Kantorovich potentials. Let us start with the easier case $p=1$. Denoting $\phi(x)=-|x|$ we have that $\phi$ is $1-$Lipschitz, radially symmetric  
	and decreasing (and thus $\psi=-\phi$ is radially symmetric and increasing) and satisfies for $x\in \R^d$
	\begin{equation}\label{eq:phi1}
	 \phi(x)-\phi(T(x))=|T(x)|-|x|=|T(x)-x|
	\end{equation}
so that $(\phi,-\phi)$ is indeed a couple of Kantorovich potentials. As a side note,  it is easily seen from \eqref{eq:phi1} that on the one hand, up to a constant $\phi$ is the unique Kantorovich potential and on
the other hand that every optimal transport map must be radially symmetric (there is however no uniqueness of the optimal transport map).
Note also that the validity of  \eqref{eq:phi1} gives an alternative proof of the optimality of $T$ when $p=1$.\\
For $p>1$, we first argue that $\phi$ is radially symmetric and decreasing. For this we use that by \cite[Theorem~1.17]{otam}, if we let $h(z)=|z|^p$, then the unique Kantorovich potential $\phi$ is given by
\[
 \nabla \phi(x)=\nabla h(x-T(x))=-p\lt( (1+|x|^d)^{\frac{1}{d}}-|x|\rt)^{p-1} \frac{x}{|x|}=\phi'(|x|)\frac{x}{|x|}
\]
with $\phi'\le 0$. Now since $\phi$ and $\psi$ are $c-$conjugate, we have 
\begin{equation}\label{eq:cconj}
 \psi(y)=\inf_x\lt[ |x-y|^p-\phi(x)\rt]
\end{equation}
from which we deduce that also $\psi$ is radially symmetric. Arguing exactly as for $\phi$ but with $T$ replaced by $T^{-1}$ we see that $\psi$ is increasing on $B_1^c$.
In order to conclude that $\psi$ is in fact increasing on $\R^d$ we will prove that for $y\in B_1$,
\begin{equation}\label{eq:toprovepsi}
 \psi(y)=|y|^p-\phi(0)
\end{equation}
or in other words that \eqref{eq:cconj} is attained at $x=0$. We first point out that \eqref{eq:toprovepsi} holds for $|y|=1$ since $T^{-1}(y)=0$ and thus by definition of Kantorovich potentials
\[
 \phi(0)+\phi(y)=|y|^p.
\]
We also observe that since $\phi$ is decreasing, for every $y\in \R^d$ the optimal $x$ in \eqref{eq:cconj} must satisfy $|x|\le |y|$ (and $x= |x| y/|y|$). Fix now $y\in B_1$ and let $x$ be such that 
\[
 \psi(y)=|y-x|^p-\phi(x).
\]
Let $\bar y= y/|y|\in \partial B_1$. Using $x$ as a competitor in \eqref{eq:cconj}  for $\bar y$ we have 
\[
 \psi(\bar y)=1-\phi(0)\le (1-|x|)^p-\phi(x).
\]
Using now $0$ as competitor in \eqref{eq:cconj} for $y$ we also have 
\[
 (|y|-|x|)^p-\phi(x)\le |y|^p-\phi(0)
\]
so that 
\[
1-(1-|x|)^p\le \phi(0)-\phi(x)\le |y|^p-  (|y|-|x|)^p.
\]
However the function $t\to t^p-(t-|x|)^p$ is increasing in $[|x|,\infty)$ so that we reach a contradiction unless $x=0$.\\
To conclude, the local Lipschitz continuity of $(\phi,\psi)$ is standard, see \cite[Proposition 2.43]{Viltop}.
\end{proof}
In order to prove \eqref{eq:fugledeW} we will need the following simple result.
\begin{lemma}\label{lem:technical}
 Let $\psi$ be a radially symmetric and increasing function and let $E\subset B_{2^{1/d}}$ with $|E|=\omega_d$. Then
 \begin{equation}\label{eq:technical}
  \inf_F\lt\{ \int_F \psi \ : \ |F\cap E|=0 \textrm{ and } |F|=\omega_d\rt\}=\int_{B_{2^{1/d}}\backslash E} \psi.
 \end{equation}

\end{lemma}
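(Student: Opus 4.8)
The plan is to recognize \eqref{eq:technical} as an instance of the bathtub principle: among all sets of volume $\omega_d$ avoiding $E$, the one putting as much mass as possible where $\psi$ is smallest — i.e.\ as close to the origin as the constraint permits — is $F_0:=B_{2^{1/d}}\backslash E$, and this set is the minimizer. First I would check that $F_0$ is admissible. Since $E\subset B_{2^{1/d}}$ and $|B_{2^{1/d}}|=\omega_d(2^{1/d})^d=2\omega_d$, we get $|F_0|=|B_{2^{1/d}}|-|E|=2\omega_d-\omega_d=\omega_d$, and clearly $|F_0\cap E|=0$. Moreover $\int_{F_0}\psi$ is finite, because $\psi$, being radially monotone, is bounded on the bounded set $B_{2^{1/d}}$. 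Hence the infimum in \eqref{eq:technical} is at most $\int_{F_0}\psi=\int_{B_{2^{1/d}}\backslash E}\psi$.

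For the reverse inequality, let $F$ be any competitor, so $|F|=\omega_d$ and $|F\cap E|=0$. Since $|F|=|F_0|$ we have $|F\backslash F_0|=|F_0\backslash F|$. The key step is to localize these two pieces of the symmetric difference. On the one hand, $F_0\backslash F\subset F_0\subset B_{2^{1/d}}$, so $|x|\le 2^{1/d}$ for a.e.\ $x\in F_0\backslash F$. On the other hand, up to a Lebesgue-null set,
\[
 F\backslash F_0=F\cap (F_0)^c=F\cap\big(E\cup B_{2^{1/d}}^c\big)=F\cap B_{2^{1/d}}^c,
\]
where the last equality uses $|F\cap E|=0$; hence $|x|\ge 2^{1/d}$ for a.e.\ $x\in F\backslash F_0$.

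Finally, I would conclude using the monotonicity of $r\mapsto\psi(r)$. Writing $m=\psi(2^{1/d})$, the previous step gives $\psi\ge m$ a.e.\ on $F\backslash F_0$ and $\psi\le m$ a.e.\ on $F_0\backslash F$. Since $F\cap F_0\subset B_{2^{1/d}}$ the integral of $\psi$ over $F\cap F_0$ is finite, so we may split and cancel the common part to obtain
\[
 \int_F\psi-\int_{F_0}\psi=\int_{F\backslash F_0}\psi-\int_{F_0\backslash F}\psi\ge m\,|F\backslash F_0|-m\,|F_0\backslash F|=0 .
\]
Thus $\int_F\psi\ge\int_{B_{2^{1/d}}\backslash E}\psi$ for every admissible $F$, which together with the admissibility of $F_0$ yields \eqref{eq:technical}. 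The only delicate points are the measure-theoretic bookkeeping — all the set identities are meant up to Lebesgue-null sets, and $|F\cap E|=0$ is exactly what makes $F\backslash F_0\subset B_{2^{1/d}}^c$ — and the fact that $\psi$ need not be bounded near infinity; the latter is harmless, since $\int_{F\backslash F_0}\psi$ is well defined in $(-\infty,+\infty]$ (it is bounded below by $m\,|F\backslash F_0|$) and the displayed chain remains valid even if it equals $+\infty$. There is no genuine obstacle here: no rearrangement inequality or optimal transport argument is needed, only this bathtub-type comparison.
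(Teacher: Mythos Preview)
Your proof is correct and uses essentially the same bathtub-type comparison as the paper: both arguments exploit that $\psi$ is radially increasing to compare at the level $\psi(2^{1/d})$, using $|F\setminus F_0|=|F_0\setminus F|$ together with $F_0\setminus F\subset B_{2^{1/d}}$ and (up to a null set) $F\setminus F_0\subset B_{2^{1/d}}^c$. The only organizational difference is that the paper first isolates the auxiliary fact $\min_{|G|=|B_r|}\int_G\psi=\int_{B_r}\psi$ and applies it to $G=E\cup F$ with $r=2^{1/d}$, whereas you compare $F$ directly to $F_0=B_{2^{1/d}}\setminus E$; these are the same computation in slightly different packaging.
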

\begin{proof}
 We first show that for any $r>0$,
	\begin{equation}\label{eq:firstmin}
	\min_{|E|=|B_r|} \int_E \psi=\int_{B_r} \psi.
	\end{equation}
	For $E$ with $|E|=|B_r|$, we write
	\[
	\int_E \psi - \int_{B_r} \psi = \int_{E \setminus B_r} \psi - \int_{B_r \setminus E} \psi.
	\]
	\noindent Since $\psi$ is radially increasing we have 
	\[
	\inf_{E \setminus B_r} \psi \geq \psi(r) \geq \sup_{B_r \setminus E} \psi.
	\]
	\noindent Using $|E\backslash B_r|=|B_r\backslash E|$, we find
	\[
	\int_E \psi - \int_{B_r} \psi \geq 0
	\]
	and thus  \eqref{eq:firstmin} holds.\\
	\noindent Now if  $E\subset B_{2^{1/d}}$  with $|E|=\omega_d$, for every set  $F$ with  $|F\cap E|=0 $ and $|F|=|E|=\omega_d$, we have $|E\cup F|=|B_{2^{1/d}}|$ and thus
	\[
	\int_F \psi = \int_{F \cup E} \psi - \int_E \psi \stackrel{\eqref{eq:firstmin}}{\geq} \int_{B_{2^{1/d}}} \psi - \int_E \psi = \int_{B_{2^{1/d}} \setminus E} \psi,
	\]
	\noindent which is the desired conclusion.
\end{proof}
We may now prove Proposition \ref{prop:fugledeW}.
\begin{proof}[Proof of Proposition \ref{prop:fugledeW}]
 We may assume that $\cWp(B_1)\ge \cWp(E)$ since otherwise there is nothing to prove. Using \eqref{eq:LipW2} we see that it is enough to prove the estimate for $\alpha=1$, that is
 \begin{equation}\label{eq:toprovefuglede}
  \cWp^p(B_1)-\cWp^p(E)\le C\int_{\partial B_1} f^2.
 \end{equation}
Let $(\phi,\psi)$ be the Kantorovich potentials associated with $W_p(B_1,A)$ and recall that by Lemma \ref{lem:Wasball}, $\psi$ is radially symmetric and increasing. Since $E$ is nearly spherical we have $E\subset B_{2^{1/d}}$.
For every admissible competitor $F$ for $\cWp(E)$ we have by duality
\[
 W_p^p(E,F)\ge \int_E \phi +\int_F \psi.
\]
Taking the infimum over $F$ we get 
\[
 \cWp^p(E)\ge \int_E \phi +\inf_F\lt\{ \int_F \psi \ : \ |F\cap E|=0 \textrm{ and } |F|=\omega_d\rt\}\stackrel{\eqref{eq:technical}}{\ge} \int_E \phi+ 
 \int_{B_{2^{1/d}}\backslash E} \psi.
\]
Therefore, 
\begin{align*}
 \cWp^p(B_1)-\cWp^p(E)&\le \int_{B_1} \phi +\int_{A} \psi - \int_E \phi-\int_{B_{2^{1/d}}\backslash E} \psi\\
 &= \int_{B_1} (\phi-\psi) -\int_{E} (\phi-\psi)\\
 &=\int_{B_1\backslash E}(\phi-\psi) -\int_{E\backslash B_1} (\phi-\psi).
\end{align*}
We may now argue as in \cite[Proposition 6.2]{KnMu}. We let $c=\phi(1)-\psi(1)$ and use that $\phi$ and $\psi$ are Lipschitz continuous in a neighborhood of $\partial B_1$ to infer 
\begin{align*}
 \int_{B_1\backslash E}(\phi-\psi) -\int_{E\backslash B_1} (\phi-\psi)&=\int_{B_1\backslash E}[(\phi-\psi)-c] -\int_{E\backslash B_1} [(\phi-\psi)-c]\\
 &\le C \int_{B_1\Delta E} |1-|x||\\
 &\le C \int_{\partial B_1} \int_0^{f(x)} t dt d\mathcal{H}^{d-1}(x)\\
 &\le C\int_{\partial B_1} f^2.
\end{align*}
This concludes the proof of \eqref{eq:toprovefuglede}.
\end{proof}
\bibliographystyle{acm}
\bibliography{biblio_cangol}

\end{document}